%

\documentclass[12pt, reqno]{amsart}
\usepackage{amsmath, amsthm, amscd, amsfonts, amssymb, graphicx, color}
\usepackage[bookmarksnumbered, colorlinks, plainpages]{hyperref}
\hypersetup{colorlinks=true,linkcolor=red, anchorcolor=green, citecolor=cyan, urlcolor=red, filecolor=magenta, pdftoolbar=true}

\textheight 22.5truecm \textwidth 14.5truecm
\setlength{\oddsidemargin}{0.35in}\setlength{\evensidemargin}{0.35in}

\setlength{\topmargin}{-.5cm}

\usepackage{amsfonts}
\usepackage{color,graphicx,shortvrb}
\usepackage{enumerate}
\usepackage{amsmath}
\usepackage{amssymb}
\usepackage{graphicx}

\usepackage{amsmath}

\newtheorem{theorem}{Theorem}[section]

\newtheorem{corollary}[theorem]{Corollary}

\newtheorem{proposition}[theorem]{Proposition}

\newtheorem{definition}[theorem]{Definition}
\newtheorem{remark}[theorem]{Remark}

\usepackage{centernot}

\usepackage{tikz}
\usetikzlibrary{arrows,matrix}

\newcommand{\uno}{\textbf{1}}

\def\RR{{\mathbb{R}}}
\def\CC{{\mathbb{C}}}
\def\NN{{\mathbb{N}}}

\def\J#1#2#3{ \left\{ #1,#2,#3 \right\} }

\def\min{{\rm Min}}

\newcommand{\xdownarrow}[1]{%
  {\left\downarrow\vbox to #1{}\right.\kern-\nulldelimiterspace}
}

\newcommand{\xDownarrow}[1]{%
  \ensuremath{%
    \left\Downarrow\vbox to #1{}\right.\kern-\nulldelimiterspace
  }%
}

\newcommand{\xUparrow}[1]{%
  \ensuremath{%
    \left\Uparrow\vbox to #1{}\right.\kern-\nulldelimiterspace
  }%
}

\makeatletter
\newcommand{\xRightarrow}[2][]{\ext@arrow 0359\Rightarrowfill@{#1}{#2}}
\makeatother

\makeatletter
\newcommand{\xLeftarrow}[2][]{\ext@arrow 0359\Leftarrowfill@{#1}{#2}}
\makeatother

\usepackage{enumerate}
\usepackage{amsmath}
\usepackage{amssymb}

\begin{document}
\title[Linear maps preserving extreme points]{Linear maps between C$^*$-algebras preserving extreme points and strongly linear preservers}

\author[Burgos]{Mar{\'i}a J. Burgos}

\address{Campus de Jerez, Facultad de Ciencias Sociales y de la Comunicaci\'on
Av. de la Universidad s/n, 11405 Jerez, C\'adiz, Spain}

\email{maria.burgos@uca.es}

\author[M{\'a}rquez-Garc{\'i}a]{Antonio C. M{\'a}rquez-Garc{\'i}a}

\address{Departamento de Matem\'aticas, Universidad de Almer\'ia, 04120 Almer\'ia, Spain}

\email{acmarquez@ual.es}

\author[Morales-Campoy]{Antonio Morales-Campoy}

\address{Departamento de Matem\'aticas, Universidad de Almer\'ia, 04120 Almer\'ia, Spain}
\email{amorales@ual.es}

\author[Peralta]{Antonio M. Peralta}
\address{Departamento de An{\'a}lisis Matem{\'a}tico, Universidad de Granada,\\
Facultad de Ciencias 18071, Granada, Spain}

\email{aperalta@ugr.es}

\thanks{Authors partially supported by the Spanish Ministry of Economy and Competitiveness project no. MTM2014-58984-P and Junta de Andaluc\'{\i}a grants FQM375, FQM194. The second author is supported by a Plan Propio de Investigaci{\' o}n grant from University of Almer{\' i}a. The fourth author acknowledges the support from the Deanship of Scientific Research at King Saud University (Saudi Arabia) research group no. RG-1435-020.}

\keywords{C$^*$-algebra, JB$^*$-triple, triple homomorphism, linear preservers, extreme points preserver, strongly Brown-Pedersen quasi-invertibility preserver}

\subjclass[2010]{47B49, 15A09, 46L05, 47B48}

\begin{abstract} We study new classes of linear preservers between C$^*$-algebras and JB$^*$-triples. Let $E$ and $F$ be JB$^*$-triples with $\partial_{e} (E_1)$. We prove that every linear map $T:E\to F$ strongly preserving Brown-Pedersen quasi-invertible elements is a triple homomorphism. Among the consequences, we establish that, given two unital C$^*$-algebras $A$ and $B,$ for each linear map $T$ strongly preserving Brown-Pedersen quasi-invertible elements, then there exists a Jordan $^*$-homomorphism $S: A\to B$ satisfying $T(x) = T(1) S(x)$, for every $x\in A$. We also study the connections between linear maps strongly preserving Brown-Pedersen quasi-invertibility and other clases of linear preservers between C$^*$-algebras like Bergmann-zero pairs preservers, Brown-Pedersen quasi-invertibility preservers and extreme points preservers.
\end{abstract}

\vspace*{-1.5cm}

\maketitle
\section{Introduction}

Let $X$ be a Banach space. In many favorable cases, the set $\partial_{e} (X_1)$, of all extreme points of the closed unit ball, $X_1$, of $X$, reveals many of the geometric properties of the whole Banach space $X$. There are spaces $X$ with $\partial_{e} (X_1)=\emptyset$, however, the Krein-Milman theorem guarantees that $\partial_{e} (X_1)$ is non-empty when $X$ is a dual space.\smallskip

Let $A$ be a C$^*$-algebra. It is known that $\partial_{e} (A_1)\neq\emptyset$ if and only if $A$ is unital (see \cite[Theorem I.10.2$(i)$]{Tak}). When $A$ is commutative, the unitary elements in $A$ are precisely the extreme points of the closed unit ball of $A$. The same statement remains true when $A$ is a finite von Neumann algebra (cf. \cite[Lemma 2]{MaMo98}). For a general unital C$^*$-algebra $A$, every unitary element in $A$ is an extreme point of the closed unit ball of $A$, however, the reciprocal statement is, in general, false (for example a non-surjective isometry in $B(H)$ is not a unitary element in this C$^*$-algebra). A theorem due to R.V. Kadison establishes that the extreme points of the closed unit ball of a C$^*$-algebra $A$ are precisely the maximal partial isometries of $A$, i.e., partial isometries $e\in A$ satisfying $(1-ee^*) A (1-e^*e) =0$ (cf. \cite[Theorem I.10.2]{Tak}).\smallskip

Let $A$ and $B$ be unital C$^*$-algebras. One of the consequences derived from the Russo-Dye theorem assures that a linear mapping $T: A \to B$ mapping unitary elements in $A$ to unitary elements in $B$ admits a factorization of the form $T(a) = u S (a)$ ($a\in A$), where $u$ is a unitary in $B$ and $S$ is a unital Jordan $^*$-homomorphism (cf. \cite[Corollary 2]{RuDye}). We recall that a linear map $T:A \to B$ between Banach algebras is a \emph{Jordan homomorphism} if $T(a^2)=T(a)^2$, for all $a\in A$ (equivalently, $T(ab+ba)=T(a)T(b)+T(b)T(a)$, for all $a,b\in A$). If $A$ and $B$ are unital, $T$ is called \emph{unital} if $T(1)=1$. If $A$ and $B$ are C$^*$-algebras and $T(a^*)=T(a)^*$, for every $a\in A$, then $T$ is called \emph{symmetric}. Symmetric Jordan homomorphisms are named \emph{Jordan $^*$-homomorphisms}.\smallskip

Consequently, the problem of studying the linear maps $T:A\to B$ such that $T(\partial_{e} (A_1))\subseteq \partial_{e} (B_1)$ is a more general challenge, which is directly motivated by the just mentioned consequence of the Russo-Dye theorem. We only know partial answers to this problem. Concretely, V. Mascioni and L. Moln\'{a}r studied the linear maps on a von Neumann factor $M$ which preserve the extreme points of the unit ball of $M$. They prove that if $M$ is infinite, then every linear mapping $T$ on $M$ preserving extreme points admits a factorization of the form $T(a) = u S (a)$ ($a\in M$), where $u$ is a (fixed) unitary in $M$ and $S$ either is a unital $^*$-homomorphism or a unital $^*$-anti-homomorphism (see \cite[Theorem 1]{MaMo98}). Theorem 2 in \cite{MaMo98} states that, for a finite von Neumann algebra $M$, a linear map $T:M\to M$ preserves extreme points of the unit ball of $M$ if and only if there exist a unitary operator $u\in M$ and a unital Jordan $^*$-homomorphism $S: M\to M$ such that $T(a) = u S (a)$ ($a\in A$). In \cite{LaMa}, L.E. Labuschagne and V. Mascioni study linear maps between C$^*$-algebras whose adjoints preserve extreme points of the dual ball.\smallskip

The above results of Mascioni and L. Moln\'{a}r are the most conclusive answers we know about linear maps between unital C$^*$-algebras preserving extreme points. In this note we shall revisit the problem in full generality. We present several counter-examples to illustrate that the conclusions proved by Mascioni and Moln\'{a}r for von Neumann factors need not be true for linear mappings preserving extreme points between unital C$^*$-algebras (compare Remarks \ref{remark v is not necessarily unitary} and \ref{remark extreme points preserver is not strongly BP-preserver}). It seems natural to ask whether a different class of linear preservers satisfies the same conclusions found by Mascioni and Moln\'{a}r.\smallskip

Every unital Jordan homomorphism between Banach algebras \emph{strongly preserves invertibility}, that is, $T(a^{-1})=T(a)^{-1}$, for every invertible element $a\in A$. Moreover, Hua's theorem (see \cite{Hua}) states that every unital additive map between fields that strongly preserves invertibility is either an isomorphism or an anti-isomorphism.\smallskip

Let $A$ be a Banach algebra. Recall that an element $a\in A$ is called \emph{regular} if there is $b$ in $A$ satisfying $aba=a$ and $b=bab$.
Given $a$ and $b$ in a C$^*$-algebra $A$, we shall say that $b$ is a \emph{Moore-Penrose inverse} of $a$ if $a=aba$, $b=bab$ and $ab$ and $ba$ are self-adjoint. It is known that every regular element $a$ in $A$ admits a unique Moore-Penrose inverse that will be denoted by $a^\dag$ (\cite{HarMb92}). Let $A^\dag$ be the set of regular elements in the C$^*$-algebra $A$.\smallskip

We say that a linear map $T$ between C$^*$-algebras $A$ and $B$ \emph{strongly preserves Moore-Penrose invertibility} if $T(a^\dag)=T(a)^\dag$, for all $a\in A^\dag$. It is known that every Jordan $^*$-homomorphism strongly preserves Moore-Penrose invertibility. In \cite{Mb09}, M. Mbekhta proved that a surjective unital bounded linear map from a real rank zero C$^*$-algebra to a prime C$^*$-algebra strongly preserves Moore-Penrose invertibility if and only if it is either an $^*$-homomorphism or an $^*$-anti-homomorphism. Recently in \cite{BMM11} the first three authors of this note show that a linear map $T$ strongly preserving Moore-Penrose invertibility between C$^*$-algebras $A$ and $B$, is a Jordan $^*$-homomorphism multiplied by a regular element of $B$ commuting with the image of $T$, whenever the domain C$^*$-algebra $A$ is unital and linearly spanned by its projections, or when $A$ is unital and has real rank zero and $T$ is bounded. It is also proved that every bijective linear map strongly preserving Moore-Penrose invertibility from a unital C$^*$-algebra with essential socle is a Jordan $^*$-isomorphism multiplied by an involutory element. The problem for linear maps strongly preserving Moore-Penrose invertibility between general C$^*$-algebras remains open.\smallskip

The set, $A_q^{-1},$ of quasi-invertible elements in a unital C$^*$-algebra $A$ was introduced by L. Brown and G.K. Pedersen as the set $A^{-1} \mathfrak{A_1} A^{-1}$, where $A^{-1}$ and $\mathfrak{A_1}$ denote the set of invertible elements in $A$ and the set of extreme points of the closed unit ball of $A$, respectively (see \cite{BroPe95}). It is known that $a\in A_q^{-1}$ if and only if there exists $b\in A$ such that $B(a,b) =0$ (cf. \cite[Theorem 1.1]{BroPe95} and \cite[Theorem 11]{JamSiddTah2013}), where $B(a,b)$ denotes the Bergmann operator on $A$ associated with $(a,b)$ (see section \ref{Sec: prelim} for details and definitions).\smallskip

The notion of quasi-invertible element was extended by F.B. Jamjoom, A.A. Siddiqui, and H.M. Tahlawi to the wider setting of JB$^*$-triples.  An element $x$ in a JB$^*$-triple $E$ is called \emph{Brown-Pedersen quasi-invertible} if there exists $y\in E$ such that $B(x,y)=0$ (cf. \cite{JamSiddTah2013}). The element $y$ is called a Brown-Pedersen quasi-inverse of $x$. It is known that $B(x,y)=0$ implies $B(y,x)=0$.  Moreover, the Brown-Pedersen quasi-inverse of an element is not unique. Indeed, if $B(x,y)=0$ then it can be checked that $B(x,Q(y)(x))=0$, so for any Brown-Pedersen quasi-inverse $y$ of $x$, $Q(y)(x)$ also is a Brown-Pedersen quasi-inverse of $x$. It is established in \cite[Theorems 6 and 11]{JamSiddTah2013} that an element $x$ in $E$ is Brown-Pedersen quasi-invertible if, and only if, it is (von Neumann) regular and its range tripotent is an extreme point of the closed unit ball of $E$, equivalently, there exists a complete tripotent $v\in E$ such that $x$ is positive and invertible in $E_2(v)$. Every regular element $x$ in $E$ admits a unique generalized inverse, which is denoted by $x^{\dag}$ (see sections \ref{Sec: prelim} for more details). In particular, the set, $E_q ^{-1}$, of all Brown-Pedersen quasi-invertible elements in $E$ contains all extreme points of the closed unit ball of $E$.\smallskip

We consider in this paper a new class of linear preserver. A linear map $T$ between JB$^*$-triples \emph{strongly preserves Brown-Pedersen quasi-invertibility} if $T$ preserves Brown-Pedersen quasi-invertibility and $T(x^{\wedge}) = T(x)^{\wedge}$ for every $x\in E^{-1}_q$. In the main result of this note we prove the following:  Let $A$ and $B$ be unital $C^*$-algebras. Let $T:A\to B$ be a linear map strongly preserving Brown-Pedersen quasi-invertible elements. Then there exists a Jordan $^*$-homomorphism $S: A\to B$ satisfying $T(x) = T(1) S(x)$, for every $x\in A$ (see Theorem \ref{thm them BP-primeCstar necessary condition unital Calgebras}).\smallskip

In section \ref{sec:5} we also explore the connections between linear maps strongly preserving Brown-Pedersen quasi-invertibility and other clases of linear preservers between C$^*$-algebras like Bergmann-zero pairs preservers, Brown-Pedersen quasi-invertibility preservers, and extreme points preservers.\smallskip

The reader should have realized at this point, that novelties here rely on results and tools of Jordan theory and JB$^*$-triples (see section \ref{Sec: prelim} for definitions). The research on linear preservers on C$^*$-algebras benefits from new results on linear preservers on JB$^*$-triples. In Theorem \ref{non-empty-extr} we prove that every linear map strongly preserving regularity between JB$^*$-triples $E$ and $F$ with $\partial_{e}(E_1)\neq\emptyset$, is a triple homomorphism (i.e. it preserves triple products). We complement this result by showing that the same conclusion remains true for every bounded linear operator strongly preserving regularity from a weakly compact JB$^*$-triple into another JB$^*$-triple (see Theorem \ref{weakly-compact}). The assumption of continuity cannot be dropped in the result for weakly compact JB$^*$-triples (cf. Remark \ref{cexample 42}). The most significan result (Theorem \ref{thm non-empty-extr strongly BP preserver triples}) assures that every linear map strongly preserving Brown-Pedersen quasi-invertible elements between JB$^*$-triples $E$ and $F$, with $\partial_{e}(E_1)\neq\emptyset$, is a triple homomorphism.

\section{Preliminaries}\label{Sec: prelim}

As we have commented in the introduction, in this paper we employ techniques and results in JB$^*$-triple theory to study new classes of linear preservers between C$^*$-algebras in connection with linear maps preserving extreme points. For this purpose, we shall regard every C$^*$-algebra as an element in the wider class of JB$^*$-triples. Following \cite{Ka}, a JB$^*$-triple is a complex Banach space $E$ together with a continuous triple product $\J ... : E\times E\times E \to E,$ which is conjugate linear in the middle variable
and symmetric and bilinear in the outer variables satisfying that,
\begin{enumerate}[{\rm (a)}] \item $L(a,b) L(x,y) = L(x,y) L(a,b)
+ L(L(a,b)x,y)
 - L(x,L(b,a)y),$
where $L(a,b)$ is the operator on $E$ given by $L(a,b) x = \J
abx;$ \item $L(a,a)$ is an hermitian operator with non-negative
spectrum; \item $\|L(a,a)\| = \|a\|^2$.\end{enumerate}

For each $x$ in a JB$^*$-triple $E$, $Q(x)$ will stand for the
conjugate linear operator on $E$ defined by the assignment
$y\mapsto Q(x) y = \J xyx$.\smallskip

The \emph{Bergmann operator}, $B(x,y)$, associated with a pair of elements $x,y\in E$ is the mapping defined by $$B(x,y)=I_E-2L(x,y)+Q(x)Q(y).$$

Every C$^*$-algebra is a JB$^*$-triple via the triple product given
by \begin{equation}\label{e-triprod-Cstar}
2 \J xyz = x y^* z +z y^* x,
\end{equation} and every JB$^*$-algebra is a
JB$^*$-triple under the triple product \begin{equation}\label{e
triple product JB-alg} \J xyz = (x\circ y^*) \circ z + (z\circ
y^*)\circ x - (x\circ z)\circ y^*.
\end{equation}

It is worth mentioning that, by the \emph{Kaup-Banach-Stone} theorem, a
linear surjection between JB$^{*}$-triples is an isometry if and only if it is a triple
isomorphism (compare \cite[Proposition 5.5]{Ka} or \cite[Corollary 3.4]{BePe} or \cite[Theorem 2.2]{FerMarPe}). We recall that a linear map $T:E\to F$ between JB$^*$-triples is a \emph{triple homomorphism} if $$T(\{x,y,z\})=\{T(x),T(y),T(z)\}\quad \mbox{for every } x,y,z\in E.$$
It follows, among many other consequences, that when a
JB$^{*}$-algebra $J$ is a JB$^{*}$-triple for a suitable triple
product, then the latter coincides with the one defined in
\eqref{e triple product JB-alg}.\smallskip

A JBW$^*$-triple is a JB$^*$-triple which is also a dual Banach
space (with a unique isometric predual \cite{BarTi}). It is known
that the triple product of a JBW$^*$-triple is separately weak$^*$
continuous \cite{BarTi}. The second dual of a JB$^*$-triple $E$ is
a JBW$^*$-triple with a product extending the product of $E$
\cite{Di}.\smallskip

An element $e$ in a JB$^*$-triple $E$ is said to be a
\emph{tripotent} if $\J eee =e$. Each tripotent $e$ in $E$ gives
raise to the following decomposition of $E$,
$$E= E_{2} (e) \oplus E_{1} (e) \oplus E_0 (e),$$ where for
$i=0,1,2,$ $E_i (e)$ is the $\frac{i}{2}$ eigenspace of $L(e,e)$
(compare \cite[Theorem 25]{loos}). The natural projections of $E$
onto $E_i(e)$ will be denoted by $P_i(e)$. This decomposition is
termed the \emph{Peirce decomposition} of $E$ with respect to the
tripotent $e.$ The Peirce decomposition satisfies certain rules
known as \emph{Peirce arithmetic}: $$\J {E_{i}(e)}{E_{j}
(e)}{E_{k} (e)}\subseteq E_{i-j+k} (e),$$ if $i-j+k \in \{
0,1,2\}$ and is zero otherwise. In addition, $$\J {E_{2}
(e)}{E_{0}(e)}{E} = \J {E_{0} (e)}{E_{2}(e)}{E} =0.$$
We observe that, for a tripotent $e\in E$, $B(e,e)=P_0(e)$.\smallskip

The Peirce space $E_2 (e)$ is a JB$^*$-algebra with product
$x\circ_e y := \J xey$ and involution $x^{\sharp_e} := \J exe$.
\smallskip

 A tripotent $e$ in $E$ is called \emph{complete} if the equality $E_0(E)=0$
 holds. When $E_2(e)=\CC e \neq \{0\},$ we say that $e$ is \emph{minimal}.\smallskip

For each element $x$ in a JB$^*$-triple $E$, we shall denote
$x^{[1]} := x$, $x^{[3]} := \J xxx$, and $x^{[2n+1]} := \J
xx{x^{[2n-1]}},$ $(n\in \NN)$. The symbol $E_x$\label{equ subtriple single generated} will stand for the
JB$^*$-subtriple generated by the element $x$. It is known that
$E_x$ is JB$^*$-triple isomorphic (and hence isometric) to $C_0
(\Omega)$ for some locally compact Hausdorff space $\Omega$
contained in $(0,\|x\|],$ such that $\Omega\cup \{0\}$ is compact,
where $C_0 (\Omega)$ denotes the Banach space of all
complex-valued continuous functions vanishing at $0.$ It is also
known that if $\Psi$ denotes the triple isomorphism from $E_x$
onto $C_{0}(\Omega),$ then $\Psi (x) (t) = t$ $(t\in \Omega)$ (cf.
\cite[Corollary 4.8]{Ka0}, \cite[Corollary 1.15]{Ka} and
\cite{FriRu}). The set $\overline{\Omega }=\hbox{Sp} (x)$ is
called the \emph{triple spectrum} of $x$. We should note that $C_0
(\hbox{Sp} (x)) = C(\hbox{Sp}(x))$, whenever $0\notin \hbox{Sp}
(x)$.\smallskip

Therefore, for each $x\in E$, there exists a unique element $y\in
E_x$ satisfying that $\J yyy =x$. The element $y,$ denoted by
$x^{[\frac13 ]}$, is termed the \emph{cubic root} of $x$. We can
inductively define, $x^{[\frac{1}{3^n}]} =
\left(x^{[\frac{1}{3^{n-1}}]}\right)^{[\frac 13]}$, $n\in \NN$.
The sequence $(x^{[\frac{1}{3^n}]})$ converges in the weak$^*$
topology of $E^{**}$ to a tripotent denoted by $r(x)$ and called
the \emph{range tripotent} of $x$. The tripotent $r(x)$ is the
smallest tripotent $e\in E^{**}$ satisfying that $x$ is positive
in the JBW$^*$-algebra $E^{**}_{2} (e)$ (compare \cite[Lemma
3.3]{EdRu}).\smallskip

Regular elements in Jordan triple systems and JB$^*$-triples have been deeply studied in \cite{FerGarSanMo92, Loos, Ka96} and \cite{BurKaMoPeRa08}.
An element $a$ in a JB$^*$-triple $E$ is called \emph{von Neumann
regular} if there exists (a unique) $b\in E$ such that $Q(a)(b)=a$,
$Q(b)(a)=b$ and $Q(a)Q(b)=Q(b)Q(a)$, or equivalently $Q(a)(b)=a$ and
$Q(a)(b^{[3]})=b$. The element $b$ is called the \emph{generalized
inverse} of $a$. We observe that every tripotent $e$ in $E$ is von
Neumann regular and its generalized inverse coincides with it.\smallskip

Throughout this note, we shall denote by $E^\wedge$ the set of regular elements in a JB$^*$-triple $E$, and for an element $a\in E^\wedge$, $a^\wedge$ will stand for its generalized inverse.\smallskip

To simplify notation, for a C$^*$-algebra $A$, let $E_A$ denote the JB$^*$-triple with underlaying Banach space $A$, and triple product defined by (\ref{e-triprod-Cstar}).
Let $a$ be an element in $E_A$. Then the mapping $Q(a)$ is given by $Q(a)(x) = \J axa = ax^*a$. Thus, $a$ is Moore-Penrose invertible in $A$ with Moore-Penrose inverse $b$ if, and only if, $a\in E_{A}^{\wedge}$ and $a^\wedge=(a^{\dag})^*=(a^*)^{\dag}$.\smallskip

Every triple homomorphism   $T:E\to F$  between JB$^*$-triples \emph{strongly preserves regularity}, that is, $T(x^\wedge)=T(x)^\wedge$ for every $ x \in E^\wedge$.
In \cite{BurMarMor12}, the authors characterized the triple homomorphism between $C^*$-algebras as the linear maps strongly preserving regularity. As a consequence, it is proved that a self-adjoint linear map from a unital C$^*$-algebra $A$ into a C$^*$-algebra $B$ is a triple homomorphism if and only if it strongly preserves Moore-Penrose invertibility (\cite[Theorem 3.5]{BurMarMor12}).\medskip


\section{Linear maps strongly preserving regularity on  JB$^*$-triples}

It is known that a non-zero element $a$ in a JB$^*$-triple $E$, is von Neumann regular if, and only if, $Q(a)(E)$ is closed, if and only if, the range tripotent $r(a)$ of $a$ lies in $E$ and $a$ is positive and invertible element in the JB$^*$-algebra
$E_2(e)$ (cf. \cite{FerGarSanMo92}, \cite{Ka96} or \cite{BurKaMoPeRa08}). Moreover, when $a$ is von Neumann regular,  $$L(a, a^{\wedge}) = L(a^{\wedge},a) = L(r(a), r(a)),$$ and $$Q(a)Q(a^{\wedge})=Q(a^{\wedge})Q(a)=P_2(r(a))$$ (see \cite[Lemma 3.2]{Ka01} or \cite[Theorem 3.4]{BurKaMoPeRa08}).
Recall that an element $a$ in a unital Jordan algebra $J=(J,\circ)$
is \emph{invertible} if there exists a (unique) element $b\in J$ such that $a\circ b=\uno$ and $a^2\circ
b=a$, equivalently $U_a$  is invertible with inverse $U_{b}$, where
$U_a$ is defined by $U_a(x)=2a\circ(a\circ x)-a^2\circ x$ (\cite[Theorem 13]{Jac}). If $a$ is invertible, its inverse is denoted by $a^{-1}$.
Moreover if $a$ and $b$ are invertible elements in the Jordan
algebra $J$ such that $a-b^{-1}$ is also invertible, then
$a^{-1}+(b^{-1}-a)^{-1}$ is invertible, and the Hua's identity
\begin{equation}\label{Hua}
\left(a^{-1}-(a-b^{-1})^{-1}\right) ^{-1}= \left(a^{-1}+(b^{-1}-a)^{-1}\right) ^{-1}=a-U_a(b)
\end{equation} holds
(see \cite[page 54, Exercise 3]{Jac}).\smallskip

A linear map $T: E\to F$ between JB$^*$-triples \emph{strongly preserves regularity} if $T(x^{\wedge}) = T(x)^{\wedge}$ for every $x\in E^{\wedge}$.\smallskip

The next result is inspired in \cite[Lemma 3.1]{BMM11}.

\begin{proposition}\label{cubes}
Let $E$ and $F$ be JB$^*$-triples, and let $T :E\to F$ be a linear map such that  $T(x^\wedge)=T(x)^\wedge$ for every $ x \in E^\wedge$. Then $$T(x^{[3]})=T(x)^{[3]},$$for every $ x \in E^\wedge$.
\end{proposition}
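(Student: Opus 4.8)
The plan is to run Hua's identity \eqref{Hua} inside the Peirce JB$^*$-algebra $E_2(r(x))$ and then transport it through $T$. Fix a regular element $x\neq 0$ (the case $x=0$ is trivial). Since $(sx)^{[3]}=s^2\overline{s}\,x^{[3]}$ and $(s\,T(x))^{[3]}=s^2\overline{s}\,T(x)^{[3]}$ for every scalar $s$, linearity of $T$ shows that the desired equality $T(x^{[3]})=T(x)^{[3]}$ is invariant under the substitution $x\mapsto sx$; hence I may first replace $x$ by $sx$ for a conveniently large real $s>0$, the purpose of which is explained at the end. I then pass to $J:=E_2(r(x))$, where $x$ is positive and invertible, its generalized inverse $x^{\wedge}$ coincides with the Jordan inverse $x^{-1}$ computed in $J$, and $U_x(x)=x^{[3]}$ (the Jordan cube agrees with the triple cube, by a direct computation with the JB$^*$-algebra triple product, using that $x$ is self-adjoint in $J$). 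Applying \eqref{Hua} with $a=b=x$ gives the exact identity
\begin{equation}
\left(x^{\wedge}-\left(x-x^{\wedge}\right)^{\wedge}\right)^{\wedge}=x-x^{[3]},
\end{equation}
where every $\wedge$ is the generalized inverse in $E$. This is legitimate once $1\notin \hbox{Sp}(x)$, which makes $x-x^{\wedge}$ (corresponding to $t-1/t$) and the outer argument (corresponding to $-1/(t^3-t)$) invertible in $J$, so that their $J$-inverses equal their generalized inverses in $E$.

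Next I would apply $T$ to this identity. Each element to which $\wedge$ is applied, namely $x$, $x-x^{\wedge}$, and the outer argument, is regular, so the hypothesis $T(z^{\wedge})=T(z)^{\wedge}$ applies repeatedly; combined with linearity, the right-hand side becomes $T(x)-T(x^{[3]})$ and the left-hand side becomes the very same Hua expression in $F$ evaluated at $T(x)$:
\begin{equation}
\left(T(x)^{\wedge}-\left(T(x)-T(x)^{\wedge}\right)^{\wedge}\right)^{\wedge}=T(x)-T(x^{[3]}).
\end{equation}

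To finish I would invoke \eqref{Hua} once more, this time in the JB$^*$-algebra $F_2(r(T(x)))$ with $a=b=T(x)$, so that the left-hand side above equals $T(x)-U_{T(x)}(T(x))=T(x)-T(x)^{[3]}$. Comparing the two evaluations of this left-hand side yields $T(x^{[3]})=T(x)^{[3]}$, and undoing the rescaling gives the statement for the original $x$. I emphasize that no continuity of $T$ is required, since \eqref{Hua} is a finite algebraic identity.

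The main obstacle is exactly this last step: to apply \eqref{Hua} on the codomain I must know that $T(x)$ and $T(x)-T(x)^{\wedge}$ are invertible in $F_2(r(T(x)))$ --- equivalently $1\notin \hbox{Sp}(T(x))$ --- and that all the $\wedge$'s in the transported identity are genuinely Jordan inverses inside the \emph{single} Peirce algebra $F_2(r(T(x)))$, whereas neither $\hbox{Sp}(T(x))$ nor $r(T(x))$ is under my direct control. This is where the rescaling is used: since $T(x)$ is regular, $\min\hbox{Sp}(T(x))>0$, so choosing $s>0$ with $s>1/\min\hbox{Sp}(x)$ and $s>1/\min\hbox{Sp}(T(x))$ forces both $\hbox{Sp}(sx)$ and $\hbox{Sp}(T(sx))=s\,\hbox{Sp}(T(x))$ to lie in $(1,\infty)$, avoiding $1$ on both sides at once. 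Once this is arranged, the identification of the generalized inverse with the Jordan inverse inside $E_2(r(\cdot))$ (recalled before the proposition) turns every $\wedge$ in the transported identity into a bona fide inverse in a common Peirce algebra, and Hua's identity applies on the codomain as required.
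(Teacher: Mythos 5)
Your argument is correct and is essentially the paper's own proof in a different parameterization: the paper applies Hua's identity \eqref{Hua} with $a=x$, $b=\lambda^{-1}x$ for $0<|\lambda|<\min\{\|x^{\wedge}\|^{-2},\|T(x)^{\wedge}\|^{-2}\}$, and your rescaling $x\mapsto sx$ followed by Hua with $a=b=sx$ is exactly this family under the substitution $\lambda=s^{-2}$ (since $\|x^{\wedge}\|^{-1}=\min\mathrm{Sp}(x)$, your large-$s$ spectral conditions are the same constraints), with the same identification of generalized inverses with Jordan inverses inside $E_x\equiv C(\mathrm{Sp}(x))$ and the same double use of Hua in domain and codomain. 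The only pinhole is the subcase $T(x)=0$, where your claim $\min\mathrm{Sp}(T(x))>0$ fails; but there the transported identity already gives $\bigl(T(x)^{\wedge}-(T(x)-T(x)^{\wedge})^{\wedge}\bigr)^{\wedge}=0$, hence $T(x^{[3]})=0=T(x)^{[3]}$ without invoking Hua in $F$, which is also what justifies the paper's ``we may assume $T(x)\neq 0$.''
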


\begin{proof}
Let $x\in E^\wedge \setminus \{0\}$. Let $e=r(x)$ the range tripotent of $x$. As we have just mentioned, $x$ is positive and
invertible in the JB$^*$-algebra $E_2(e)$, with inverse $x^\wedge$, and $0\notin Sp(x)$. We identify $E_x$ (the JB$^*$-subtriple of $E$ generated by $x$) with $C(\hbox{Sp} (x))$ in such a way that $x$ corresponds to the function $t\mapsto t$. Hence for every $\lambda \in \CC$ with
$0<|\lambda|<||x^\wedge||^{-2}$, the element $\lambda x^\wedge-x$ is invertible in $E_x$, and hence invertible in $E_2(e)$, with inverse $(\lambda x^\wedge -x)^\wedge$. In this case, $x^\wedge+ (\lambda x^\wedge -x)^\wedge$ is invertible in $E_x$ (and in $E_2(e)$).\smallskip

Further, the inverses of $x-\lambda x^\wedge$ and $x^\wedge-(x-\lambda x^\wedge)^\wedge$ in $E_x$ (or in $E_2(e)$) are their generalized inverses in $E$ (let us recall that the triple product induced on  $E_2(e)$ by the Jordan $^*$-algebra structure coincides
with its original triple product, and $Q(x)=U_x \circ \sharp$, for every $x\in E$). By Hua's identity (cf. \eqref{Hua}), applied to $a= x$ and $b= \lambda^{-1} x$,
we obtain $$x-\lambda^{-1} x^{[3]}=\left(x^\wedge-(x-\lambda x^\wedge)^\wedge\right)^\wedge.$$

Let $x\in E^\wedge$. We may assume that $T(x)\neq 0$. Since $T$ strongly preserves
regularity, $T(x)^\wedge=T(x^\wedge)$. Thus, for
$\lambda\in\CC$ with $0<
|\lambda|<\min\{||x^{\wedge}||^{-2},||T(x)^{\wedge}||^{-2}\}$, we have
$$T(x)-\lambda ^{-1} T(x)^{[3]}=\left(T(x)^{\wedge}-(T(x)-\lambda
T(x)^{\wedge})^{\wedge}\right)^{\wedge}.$$ Since $T$ is linear and strongly preserves regularity, it follows that
$$T(x)-\lambda ^{-1} T(x)^{[3]}=(T(x)^\wedge-(T(x)-\lambda
T(x)^\wedge)^\wedge)^\wedge=(T(x^\wedge)-T(x-\lambda
x^\wedge)^\wedge)^\wedge $$ $$= T(\left(x^\wedge-(x-\lambda x^\wedge)^\wedge\right)^\wedge) =T(x)-\lambda^{-1}T(x^{[3]}),$$ and thus $T(x^{[3]})=T(x)^{[3]}$.
\end{proof}

Recall that two elements $a, b$ in a JB$^*$-triple $E$ are \emph{orthogonal} (written as $a\perp b$) if $L(a, b) = 0$ (see \cite[Lemma 1]{BurFerGarMarPe} for several equivalent reformulations).

\begin{remark}
	Let $T:E\to F$ be a linear map between JB$^*$-triples. Assume that $T$ strongly preserves regularity. Then $T$ preserves the orthogonality relation on regular elements. Indeed, given $a,b\in E^\wedge$, such that $a\perp b$, it can be easily seen that $$(a+\alpha b)^{\wedge}=a^\wedge + \alpha ^{-1} b^\wedge ,$$ for every $\alpha\in \RR \setminus \{0\}.$
	By assumption $T(a^\wedge + \alpha ^{-1} b^\wedge)=T(a+\alpha b)^\wedge$. In particular
	$$\{T(a)+\alpha T(b),T(a)^\wedge + \alpha ^{-1} T(b)^\wedge, T(a)+\alpha T(b)\}=T(a)+\alpha T(b).$$
It follows from the above identity that
	\begin{eqnarray*}
		& & 2\alpha\{T(a),T(a)^\wedge,T(b)\}+2\{T(a),T(b)^\wedge,T(b)\}\\ & &- \alpha^{-1}\{T(a),T(b)^\wedge,T(a)\}+\alpha^2\{T(b),T(a)^\wedge,T(b)\}=0,
	\end{eqnarray*} for every $\alpha\in \RR \setminus \{0\}.$ Therefore
	$$\{T(a),T(a)^\wedge,T(b)\}=0, \qquad \{T(a),T(b)^\wedge,T(b)\}=0.$$
	Since $L(T(a),T(a)^\wedge)=L(r(T(a)))$, and  $L(T(b),T(b)^\wedge)=L(r(T(b)))$, it follows that $T(a)\perp T(b)$.
\end{remark}

Notice that a JB$^*$-triple might contains no non-trivial tripotents (consider, for example, the $C^*$-algebra $C_0(0,1]$ of all complex-valued continuous functions on $[0,1]$ vanishing at $0$). However, since the complete tripotents of a JB$^*$-triple $E$ coincide with the extreme points of its closed unit ball (see \cite[Lemma 4.1]{BraKaUp78} and \cite[Proposition 3.5]{KaUp77}, or \cite[Theorem 3.2.3]{Chu}), every JBW$^*$-triple contains a large set of complete tripotents. Actually, the set of all tripotents in a JBW$^*$-triple is norm-total (cf. \cite[Lemma 3.11]{Horn87}).\smallskip

Let us recall that, by Lemma 2.1 in \cite{JaSiTaPe15}, an element $a$ in a JB$^*$-triple $E$ is Brown-Pedersen quasi-invertible if and only if $a$ is regular and $\{a\}^{\perp}=\{0\}$, where $\{a\}^{\perp}=\{b\in E\colon a\perp b=0\}$.\smallskip


\begin{theorem}\label{non-empty-extr} Let $E$ and $F$ be JB$^*$-triples with $\partial_{e}(E_1)\neq\emptyset$.
Let $T:E\to F$ be a linear map strongly preserving regularity. Then $T$ is a triple homomorphism.
\end{theorem}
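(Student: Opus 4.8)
The plan is to reduce the statement to the assertion that $T$ preserves cubes on the whole of $E$, that is, $T(z^{[3]})=T(z)^{[3]}$ for every $z\in E$ (and not merely for $z\in E^\wedge$, as provided by Proposition \ref{cubes}), and then to recover the full triple product by a purely algebraic polarization. First I would use the hypothesis $\partial_{e}(E_1)\neq\emptyset$ to fix a complete tripotent $e\in E$, recalling that the extreme points of $E_1$ are exactly the complete tripotents. Since $e$ is a tripotent, $B(e,e)=P_0(e)=0$, so $e\in E_q^{-1}$; more generally, every extreme point lies in $E_q^{-1}$. The decisive ingredient is the openness of the set $E_q^{-1}$ of Brown--Pedersen quasi-invertible elements of $E$ (the JB$^*$-triple analogue of the classical fact that $A_q^{-1}=A^{-1}\mathfrak{A}_1 A^{-1}$ is open). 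Granting this, for each fixed $z\in E$ there is $\delta>0$ such that $e+\lambda z\in E_q^{-1}\subseteq E^\wedge$ for all $\lambda\in\CC$ with $|\lambda|<\delta$.

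Next I would exploit Proposition \ref{cubes} along this analytic perturbation. For $|\lambda|<\delta$ the element $e+\lambda z$ is regular, so
$$T\big((e+\lambda z)^{[3]}\big)=\big(T(e)+\lambda T(z)\big)^{[3]}.$$
Expanding the cube $(a+\lambda b)^{[3]}$ of a sum produces a fixed polynomial in $\lambda$ and $\bar\lambda$ whose coefficients are the triple products $\{a,a,a\},\{a,a,b\},\{a,b,a\},\{a,b,b\},\{b,a,b\},\{b,b,b\}$; applying this to both sides and using the linearity of $T$, the two sides become polynomials in $\lambda,\bar\lambda$ that agree on a disc. Since the monomials $1,\lambda,\bar\lambda,\lambda\bar\lambda,\lambda^2,\lambda^2\bar\lambda$ are linearly independent as functions on that disc, I may match coefficients term by term. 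The coefficient of $\lambda^2\bar\lambda$ yields precisely $T(z^{[3]})=T(z)^{[3]}$, and since $z\in E$ was arbitrary this establishes cube preservation on all of $E$. (The remaining coefficients reproduce several mixed identities involving $e$, but only the cube identity is needed for the last step.)

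Finally, with $T(z^{[3]})=T(z)^{[3]}$ valid for every $z\in E$, the conclusion follows by polarization. Setting $\Phi(x,y,z):=T(\{x,y,z\})-\{T(x),T(y),T(z)\}$, which is linear in the outer variables, conjugate-linear in the middle one, and symmetric in $x,z$, the cube identity reads $\Phi(z,z,z)=0$ for all $z$. Substituting $z=a+\lambda b$ and matching coefficients in $\lambda,\bar\lambda$ gives $\Phi(a,a,b)=0$ for all $a,b$; a further linearization $a\mapsto a+\mu c$ and comparison of the coefficients of $\mu,\bar\mu$ forces $\Phi(a,c,b)=0$ for all $a,b,c$. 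Hence $T(\{x,y,z\})=\{T(x),T(y),T(z)\}$ for all $x,y,z\in E$, i.e. $T$ is a triple homomorphism.

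I expect the main obstacle to be the second step: guaranteeing that the perturbations $e+\lambda z$ remain regular. The set $E^\wedge$ of regular elements is not open in general, so one really must work inside $E_q^{-1}$ and invoke (or prove) its openness near the extreme point $e$. This is exactly where the hypothesis $\partial_{e}(E_1)\neq\emptyset$ enters, and it is what makes the perturbation method available simultaneously for \emph{all} directions $z\in E$, rather than only for $z$ lying in the Peirce-$2$ subalgebra $E_2(e)$.
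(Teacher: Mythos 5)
Your proposal is correct and follows essentially the paper's own route: you perturb along a complete tripotent $e$ so that the whole line $e+\lambda z$ lands in $E_q^{-1}\subseteq E^{\wedge}$, apply Proposition \ref{cubes}, compare coefficients of the six monomials in $\lambda,\bar{\lambda}$, and polarize, while the paper does exactly the same with the rescaled perturbation $z-\lambda e$ for $|\lambda|>\|P_2(e)(z)\|$, which is equivalent by homogeneity of cubes. The ``openness'' you grant need not be invoked globally: the local statement you actually use is precisely \cite[Lemma 2.2]{JaSiTaPe15}, cited in the paper, since $P_2(e)(e+\lambda z)=e+\lambda P_2(e)(z)$ is invertible in the unital JB$^*$-algebra $E_2(e)$ for $|\lambda|\,\|P_2(e)(z)\|<1$, whence $e+\lambda z\in E_q^{-1}$.
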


\begin{proof}
Pick a complete tripotent $e\in E$. For every $x\in E$, let  $\lambda\in \CC$, with $|\lambda|>||P_2(e)(x)||$. It is clear that $P_2(e)(x-\lambda e)=P_2(e)(x)-\lambda e$ is invertible in the unital JB$^*$-algebra $E_2(e)$. It follows from \cite[Lemma 2.2]{JaSiTaPe15} that $x-\lambda e$ is Brown-Pedersen quasi-invertible.  We know, by Proposition \ref{cubes}, that
$$ T\left((x-\lambda e)^{[3]}\right)=T(x-\lambda e)^{[3]}.$$
Since the above identity holds for every $\lambda\in \CC$, with $|\lambda|>||P_2(e)(x)||$, 
we deduce that $$T(x^{[3]})=T(x)^{[3]},$$ for every $ x \in E$. The polarization formula
\begin{equation}\label{eq polarization fla} 8\{x,y,z\}=\sum_{k=0}^{3}\sum_{j=1}^{2}i^k (-1)^j\left( x+i^k y+(-1)^j z \right)^{[3]},
\end{equation} and the linearity of $T$ assure that $T$ is a triple homomorphism.
\end{proof}

The particularization of the previous result to the setting of C$^*$-algebras seems to be a new result.

\begin{corollary}\label{cor non-empty-extr} Let $T: A\to B$ be a linear map strongly preserving regularity between C$^*$-algebras. Suppose that $\partial_{e}(A_1)\neq\emptyset$. Then $T$ is a triple homomorphism.$\hfill\Box$
\end{corollary}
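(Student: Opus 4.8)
The plan is to derive Corollary \ref{cor non-empty-extr} as a direct specialization of Theorem \ref{non-empty-extr}, since a C$^*$-algebra is an instance of a JB$^*$-triple under the triple product \eqref{e-triprod-Cstar}. The essential observation is that the hypotheses and conclusion of the corollary translate verbatim into the language of the theorem once we regard $A$ and $B$ as the JB$^*$-triples $E_A$ and $E_B$.

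First I would fix the JB$^*$-triple structure: by the preliminaries, every C$^*$-algebra $A$ becomes a JB$^*$-triple $E_A$ with triple product $2\{x,y,z\}=xy^*z+zy^*x$, and likewise for $B$. Under this identification the notion of von Neumann regularity in $E_A$ corresponds (via $Q(a)(x)=ax^*a$) exactly to Moore-Penrose invertibility in $A$, so that $E_A^\wedge$ coincides with the set of regular elements of $A$ and the generalized inverse $a^\wedge$ equals $(a^\dag)^*$. Consequently, the assumption that $T$ strongly preserves regularity as a C$^*$-algebra map is literally the statement that the induced map $T:E_A\to E_B$ satisfies $T(x^\wedge)=T(x)^\wedge$ for every $x\in E_A^\wedge$.

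Next I would check the nonemptiness hypothesis. The corollary assumes $\partial_e(A_1)\neq\emptyset$, and since $A_1$ and $(E_A)_1$ are the very same closed unit ball of the very same underlying Banach space, we have $\partial_e((E_A)_1)=\partial_e(A_1)\neq\emptyset$. Thus both hypotheses of Theorem \ref{non-empty-extr} are met for the map $T:E_A\to E_B$. Applying that theorem yields that $T$ is a triple homomorphism, i.e. $T(\{x,y,z\})=\{T(x),T(y),T(z)\}$ for all $x,y,z\in E_A$, which is precisely the desired conclusion for the C$^*$-algebras $A$ and $B$.

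There is no serious obstacle here, as the corollary is purely a matter of reinterpreting Theorem \ref{non-empty-extr} in the C$^*$-setting; the only point requiring a word of care is the dictionary between the two regularity notions, and that dictionary has already been recorded in the preliminaries (the identity $a^\wedge=(a^\dag)^*=(a^*)^\dag$). For completeness one might remark that $\partial_e(A_1)\neq\emptyset$ is equivalent to $A$ being unital by \cite[Theorem I.10.2$(i)$]{Tak}, but this is not needed for the proof. Hence the entire argument reduces to the single sentence: \emph{view $A$ and $B$ as JB$^*$-triples and invoke Theorem \ref{non-empty-extr}.}
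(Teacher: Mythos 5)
Your proof is correct and is exactly the paper's intended argument: the corollary is stated with a bare $\Box$ precisely because it is the verbatim specialization of Theorem \ref{non-empty-extr} obtained by viewing $A$ and $B$ as JB$^*$-triples under the product \eqref{e-triprod-Cstar}, with $\partial_e(A_1)=\partial_e((E_A)_1)$. Your added care with the dictionary $a^\wedge=(a^\dag)^*=(a^*)^\dag$, distinguishing triple-theoretic strong regularity preservation from strong Moore--Penrose invertibility preservation, is a reasonable precaution but, as you note, not needed for the deduction itself.
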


\section{Maps strongly preserving regularity on weakly compact JB$^*$-triples}
The notions of compact and weakly compact elements in JB$^*$-triples is due to L. Bunce and
Ch.-H. Chu \cite{BuChu}. Recall that an element a in a JB$^*$-triple $E$ is said to be \emph{compact} or \emph{weakly compact} if the mapping $Q(a)$ is compact or weakly compact, respectively. These notions extend, in a natural way, the corresponding definitions in the settings of $C^*$- and JB$^*$-algebras.
 A JB$^*$-triple $E$ is weakly compact
(respectively, compact) if every element in $E$ is weakly compact
(respectively, compact).\smallskip

In a JB$^*$-triple, the set of weakly compact elements is, in general, strictly bigger than the set of compact elements (cf. \cite[Theorem 3.6]{BuChu}). A non-zero tripotent $e$ in $E$ is called minimal whenever $E_2(e) = \CC e$. The socle, $\rm{soc}(E)$, of a
JB$^*$-triple $E$ is the linear span of all minimal tripotents in $E$. Following \cite{BuChu}, the symbol $K_0(E)$ denotes the norm-closure of $\rm{soc}(E)$. By \cite[Lemma 3.3 and Proposition 4.7]{BuChu}, the triple ideal $K_0(E)$ coincides with the set of all weakly compact elements in $E$. Hence a JB$^*$-triple $E$ is weakly compact whenever $E=K_0(E)$.
Every finite sum of mutually orthogonal minimal tripotents in a JB$^*$-triple $E$ lies in the socle of $E$. It is also known that an element $a$ in a JB$^*$-triple $E$ is weakly compact if and only if $L(a, a)$ is a weakly compact operator (see \cite{BuChu}). Therefore, for each tripotent $e$ in the socle of $E$, $P_1(e)=2L(e, e) - P_2(e)= 2 L(e, e) - Q(e)^2$ is a weakly compact operator on $E$ (cf. \cite[\S 2]{FerMarPe12}).\smallskip

It is well known that every element in the socle of a JB$^*$-triple is regular. Moreover, for every JB$^*$-triple $E$,
$$E^\wedge+\rm{soc}(E)\subseteq E^\wedge.$$
Indeed, given $a\in E^\dag$ and $x\in \rm{soc}(E)$,
$$(a+x)-Q(a+x)(a^\wedge)=x-2\{ a,a^\wedge,x \}-\{x,a^\wedge,x\}\in \rm{soc}(E)\subseteq E^\dag.$$
By Mc Coy's Lemma (see \cite{M72}), $a+x\in E^\wedge.$
Let $E$, $F$ be JB$^*$-triples. Let us assume that $E$ has non-zero socle, and  let $T:E\to F$ be a linear map strongly preserving regularity. The polarization formula \eqref{eq polarization fla} and Proposition \ref{cubes} show that $T(\{x,y,z\})=\{T(x),T(y),T(z)\}$, whenever one of the elements $x$, $y$, or $z$ is regular and the others lie in the socle.

\begin{theorem}\label{weakly-compact} Let $E$, $F$ be JB$^*$-triples, with $E$ weakly compact. Let $T:E\to F$ be a bounded linear map strongly preserving regularity. Then $T$ is a triple homomorphism.
\end{theorem}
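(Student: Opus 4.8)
The plan is to reduce the theorem to the algebraic identity $T(\{x,y,z\}) = \{T(x),T(y),T(z)\}$ on the socle, and then to propagate it to all of $E$ by continuity. The hypothesis that $E$ is weakly compact means precisely that $E = K_0(E)$, the norm-closure of $\soc(E)$; equivalently, $\soc(E)$ is norm-dense in $E$. This density, together with the boundedness of $T$, is exactly what will let us upgrade a purely algebraic statement on the socle to a statement valid on the whole triple.

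First I would establish the triple-product identity for elements of the socle. Fix $x,y,z\in\soc(E)$. Since the socle is a linear subspace consisting of regular elements, every combination $x + i^k y + (-1)^j z$ (for $k\in\{0,1,2,3\}$, $j\in\{1,2\}$) again lies in $\soc(E)$, hence belongs to $E^\wedge$. Proposition \ref{cubes} therefore applies to each such combination and gives $T\big((x+i^ky+(-1)^jz)^{[3]}\big) = T(x+i^ky+(-1)^jz)^{[3]} = (T(x)+i^kT(y)+(-1)^jT(z))^{[3]}$, where the last equality uses only the linearity of $T$. Substituting these identities into the polarization formula \eqref{eq polarization fla} and invoking the linearity of $T$ once more yields $8\,T(\{x,y,z\}) = 8\,\{T(x),T(y),T(z)\}$. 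This is precisely the identity recorded in the remark immediately preceding the statement.

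To finish, I would argue by density and continuity. The triple product of a JB$^*$-triple is (jointly) continuous, and $T$ is bounded by hypothesis, so both maps $(x,y,z)\mapsto T(\{x,y,z\})$ and $(x,y,z)\mapsto\{T(x),T(y),T(z)\}$ are continuous from $E\times E\times E$ into $F$. By the previous paragraph these two continuous maps agree on the dense set $\soc(E)\times\soc(E)\times\soc(E)$, and hence they coincide on all of $E\times E\times E$. This gives $T(\{x,y,z\})=\{T(x),T(y),T(z)\}$ for every $x,y,z\in E$, that is, $T$ is a triple homomorphism.

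The conceptual content is concentrated entirely in the socle identity, which rests on the two structural facts quoted in the excerpt: that every socle element is regular, and that the socle is a linear subspace (so that, for elements all lying in $\soc(E)$, the regularity hypothesis of Proposition \ref{cubes} is automatically met for every term of the polarization expansion). The genuinely load-bearing hypothesis, and thus the only real obstacle, is the boundedness of $T$: density of the socle is worthless without continuity to transport the identity across the closure, and indeed Remark \ref{cexample 42} is cited precisely to show that this continuity assumption cannot be dropped.
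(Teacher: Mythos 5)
Your proof is correct and follows essentially the same route as the paper's: Proposition \ref{cubes} applied on the socle (a linear subspace of regular elements), the polarization formula \eqref{eq polarization fla}, and then density of $\soc(E)$ in $E=K_0(E)$ combined with the boundedness of $T$ and the norm continuity of the triple product. The only immaterial difference is the order of steps — you polarize on the socle first and then pass to the closure, whereas the paper extends the cube identity $T(x^{[3]})=T(x)^{[3]}$ by continuity and polarizes afterwards.
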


\begin{proof}
We know, from Proposition \ref{cubes}, that $T$ preserves cubes of regular elements. Since every element in the socle of a JB$^*$-triple is regular, is follows that $T(x^{[3]})=T(x)^{[3]},$ for every $x \in \rm{soc} (E)$. Since $E=K_0(E)=\overline{\rm{soc}(E)}$, the continuity of $T$, together with the norm continuity of the triple product prove that $T$ is a triple homomorphism.
\end{proof}

In the next example we show that the continuity assumption cannot be dropped from the hypothesis in the previous theorem (even in the setting of $C^*$-algebras).

\begin{remark}\label{cexample 42} Let $c_0$ denote the $C^*$-algebra of all scalar null sequences. It is clear that $c_0$ is a weakly compact JB$^*$-triple, with $\rm{soc}(c_0)=c_{00},$ i.e. the subspace of eventually zero sequences in $c_0$. Let $\{e_n\}$ denote the standard coordinate (Schauder) basis of $c_0$. We extend this basis, via Zorn's lemma, to an algebraic (Hamel) basis of $c_0$, say $B=\{e_n\}\cup\{z_n\}$.\smallskip

We define $T:c_0 \to c_0$ as the linear (unbounded) mapping given by
$$T(e_n)=e_n, \qquad T(z_n)=n z_n.$$ Clearly $T$ is not a triple homomorphism but it strongly preserves regularity. Let us notice that $c_0^\wedge=c_{00}$ and $T(c_{00}) = c_{00}$.
\end{remark}

\section{Linear maps strongly preserving Brown-Pedersen quasi-invertibility}\label{sec:5}

In \cite{FerMarPe12}, the authors proved that Bergmann operators can be used to characterize the relation of being orthogonal in JB$^*$-triples.
More concretely, it is proved in \cite[Proposition 7]{FerMarPe12} that, for any element $x$ in a JB$^*$-triple $E$ with $||x|| < \sqrt{2}$, the orthogonal annihilator of $x$ in $E$ coincides with the set of all fixed points of the Bergmann operator $B(x, x)$. It is also obtained, in the just quoted paper, that a norm one element $e$ in a JB$^*$-triple $E$ is a tripotent if, and only if, $B(e, e) (E) = \{e\}^\perp$ (cf. \cite[Proposition 9]{FerMarPe12}).\smallskip

Having in mind all the characterizations of tripotents and Brown-Pedersen quasi-invertible elements commented above, and recalling that extreme points of the closed unit ball of a JB$^*$-triple $E$ are precisely the complete tripotents in $E$, it can be deduced that the equivalence \begin{equation}\label{eq characterization of extreme points as Bergmann zero} e\in \partial_{e} (E_1) \Leftrightarrow B(e,e)=0,
\end{equation} holds for every $e\in E_1.$\smallskip

Let $T:E\to F$ be a linear map between JB$^*$-triples. We introduce the following definitions:

\begin{definition}\label{def BP preserving } $T$ \emph{preserves Brown-Pedersen quasi-invertibility} if $T(E_q^{-1}) \subseteq F_q^{-1}$, that is, $T$ maps Brown-Pedersen quasi-invertible elements in $E$ to Brown-Pedersen quasi-invertible elements in $F$.\smallskip
\end{definition}

\begin{definition}\label{def Bergmann-zero preserving} $T$ \emph{preserves Bergmann-zero pairs} if $$B(x,y)=0\Rightarrow B(T(x),T(y))=0.$$
\end{definition}

\begin{definition}\label{def strongly BP preserving} $T$ \emph{strongly preserves Brown-Pedersen quasi-invertibility} if $T$ preserves Brown-Pedersen quasi-invertibility and $T(x^{\wedge}) = T(x)^{\wedge}$ for every $x\in E^{-1}_q$.
\end{definition}

\begin{definition}\label{def extreme points preserving} $T$ \emph{preserves extreme points} if $T(\partial_{e}(E_1))\subseteq \partial_{e}(F_1)$.
\end{definition}

It is worth to notice that all definitions above make sense for linear operators between C$^*$-algebras. In this paper we employ Jordan techniques to study these kind of mappings and so, we set the above definitions in the most general setting.\smallskip

Suppose $T: E\to F$ is a linear mapping strongly preserving Brown-Pedersen quasi-invertibility between two JB$^*$-triples. Suppose $u\in \partial_{e}(E_1)$. Then $u$ is Brown-Pedersen quasi-invertible with $u^{\wedge} =u$. It follows from our assumptions that $T(u)$ is Brown-Pedersen quasi-invertible and $T(u)^{\wedge} = T(u^{\wedge}) = T(u)$. In such a case, $\{T(u),T(u),T(u)\}= Q(T(u)) (T(u))= T(u)$ is a tripotent and Brown-Pedersen quasi-invertible, which implies that $T(u)\in \partial_{e}(E_1)$ (cf. \cite[Lemma 2.1]{JaSiTaPe15}). We have therefore shown that every linear mapping between JB$^*$-triples strongly preserving Brown-Pedersen quasi-invertibility also preserves extreme points points.\smallskip

The characterization of the extreme points of the closed unit ball of a JB$^*$-triple given in \eqref{eq characterization of extreme points as Bergmann zero} implies that every linear mapping between JB$^*$-triples preserving Bergmann-zero pairs also preserves extreme points.\smallskip

Clearly, a linear mapping  $T: E\to F$ preserving Bergmann-zero pairs maps Brown-Pedersen quasi-invertible elements in $E$ to Brown-Pedersen quasi-invertible elements in $F$.\smallskip

Therefore, for every linear mapping $T$ between JB$^*$-triples the following implications hold:{
$$\begin{array}{ccc}
  \begin{array}{c}
                                                             \hbox{$T$ preserves} \\
                                                             \hbox{Bergmann-zero pairs}
                                                           \end{array} &  \xRightarrow{   \ \ \ \ \ \ \    } & \begin{array}{c}
                                                             \hbox{$T$ preserves} \\
                                                             \hbox{BP quasi-invertible elements}
                                                           \end{array}
   \\
  \begin{array}{ccc}
    \xDownarrow{0.55cm}\rule{0.4pt}{0cm} & 
    & 
  \end{array}
   &  & \begin{array}{ccc}
          ${{\tiny (Remark \ref{remark extreme points preserver is not strongly BP-preserver})}}$ & \centernot{\xDownarrow{0.55cm}\rule{0.4pt}{0cm}} & \xUparrow{0.55cm}\rule{0.4pt}{0cm}
        \end{array} \\
  \hbox{$T$ preserves extreme points} &  \begin{array}{c}
                                                     \xLeftarrow{   \ \ \ \ \ \ \    } \\
                                                     \centernot{\xRightarrow{\hbox{{\tiny (Remark \ref{remark extreme points preserver is not strongly BP-preserver})}}}} 
                                                   \end{array}
   & \begin{array}{c}
                                                       \hbox{$T$ strongly preserves} \\
                                                       \hbox{BP quasi-invertible elements}
                                                     \end{array}
\end{array}$$}

The other implications are, for the moment, unknown. We have already commented that V. Mascioni and L. Moln\'{a}r characterized the linear maps on a von Neumann factor $M$ preserving the extreme points of the unit ball of $M$ in \cite{MaMo98}. According to our terminology, they prove that, for a von Neumann factor $M$, a linear map $T:M\to M$ such that  $B(T(a),T(a))=0$ whenever $B(a,a)=0$, is a unital Jordan $^*$-homomorphism multiplied by a unitary element (see \cite[Theorem 1, Theorem 2]{MaMo98}).\smallskip

Suppose $T:E\to E$ is a linear mapping between JB$^*$-triples which preserves Bergmann-zero pairs. Given a Brown-Pedersen quasi-invertible element $x$, with generalized inverse $x^\wedge$,  we have   $$B(x,x^\wedge)=B(x^\wedge,x)=B(r(x),r(x))=0,$$ and hence $B(T(x),T(x^\wedge))=0.$ This shows that $$Q(T(x))(T(x^\wedge))=T(x),\quad \mbox{and}\quad Q(T(x^\wedge))(T(x))=T(x^\wedge).$$ However $T(x^\wedge)$ may not coincide, in general, with $T(x)^\wedge$. So, we cannot conclude that every linear Bergmann-zero pairs preserving is a strongly Brown-Pedersen quasi-invertibility preserver (cf. Remark \ref{remark extreme points preserver is not strongly BP-preserver}).\smallskip

We mainly focus our study on maps between C$^*$-algebras. Let $A$ be a unital $C^*$-algebra $A$. It is easy to see that, for an element $a$ in $A$
$$B(a,a)(x)=(1-aa^*)x(1-a^*a), \quad \mbox{for all }x\in A.$$
Moreover it is also a well known fact that the extreme points of the closed unit ball of $A$ are precisely those elements $v$ in $A$ for which
$(1-vv^*)A(1-v^*v)=\{0\}$ (see \cite[Theorem I.10.2]{Tak}).\smallskip

Let $T:A\to B$ be a linear map between unital $C^*$-algebras which preserves extreme points. Since for every unitary element $u\in A$, $B(u,u)=0$ it follows that $B(T(u),T(u))=0$, which, in particular, shows that $T(u)$ is a partial isometry. Hence, $T$ is automatically bounded and $\|T\|=1$ (cf. \cite[\S 3]{RuDye}). Therefore, for every self-adjoint element $a\in A$, we have $$\{T(e^{ita}),T(e^{ita}),T(e^{ita})\}=T(e^{ita}) \qquad (t\in \RR).$$
Differentiating both sides of the above identity with respect to $t$, we deduce that $$2 \{i T(a e^{ita}),T(e^{ita}),T(e^{ita})\}+ \{T(e^{ita}),i T(a e^{ita}),T(e^{ita})\} =i T(a e^{ita}),$$ and hence \begin{equation}\label{eq first derivative} 2 \{ T(a e^{ita}),T(e^{ita}),T(e^{ita})\}- \{T(e^{ita}), T(a e^{ita}),T(e^{ita})\} = T(a e^{ita}),
 \end{equation}for every $t\in \mathbb{R}$. For $t=0$, we get $$ 2 \{ T(a),T(1),T(1)\}- \{T(1), T(a),T(1)\} = T(a),$$ equivalently  \begin{equation}\label{eq new 4.6} T(a)=T(a)T(1)^*T(1)+T(1)T(1)^*T(a)-T(1)T(a)^*T(1),\end{equation} for every $a=a^*$ in $A$.\smallskip

Differentiating \eqref{eq first derivative} with respect to $t$, we obtain 
$$ T(a^2 e^{ita}) = 2 \{ T(a^2 e^{ita}),T(e^{ita}),T(e^{ita})\} - 4 \{ T(a e^{ita}), T(a e^{ita}),T(e^{ita})\} $$ $$ + 2 \{ T(a e^{ita}),T(e^{ita}), T(a e^{ita})\} + \{T(e^{ita}),  T(a^2 e^{ita}),T(e^{ita})\},$$ for every $t\in \mathbb{R}$. In the case $t=0$ we get $$ T(a^2) = 2 \{ T(a^2),T(1),T(1)\} - 4 \{ T(a), T(a),T(1)\} $$ $$ + 2 \{ T(a),T(1), T(a)\} + \{T(1),  T(a^2),T(1)\},$$ or equivalently, \begin{equation}\label{eq new 4.7} T(a^2) = T(a^2)T(1)^*T(1)+ T(1) T(1)^* T(a^2)  - 2 T(a)T(a)^*T(1) \end{equation} $$-2 T(1) T(a)^* T(a)  + 2 T(a)T(1)^*T(a)+ T(1)T(a^2)^*T(1),
$$ for every $a=a^*$ in $A$.\smallskip

Multiplying identity \eqref{eq new 4.6} by $T(1)^*$ from both sides, and taking into account that $T(1)$ is a (maximal) partial isometry, we deduce that
\begin{equation}\label{partial}T(1)^* T(a) T(1)^* =T(1)^*T(1)T(a)^*T(1)T(1)^*,\end{equation} for every self-adjoint element $a\in A$.

\begin{proposition}\label{p BP-primeCstar necessary condition v unitary} Let $A$ and $B$ be unital $C^*$-algebras. Let $T:A\to B$ be a linear map preserving extreme points. Suppose that $T(1)$ is a unitary in $B$. Then there exists a unital Jordan $^*$-homomorphism $S: A\to B$ satisfying $T(a) = T(1) S(a)$, for every $a\in A$.
\end{proposition}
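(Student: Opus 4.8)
The plan is to define $S(a) := T(1)^* T(a)$ and show that this map is a unital Jordan $^*$-homomorphism; since $T(1)$ is unitary, the factorization $T(a) = T(1) S(a)$ will then be immediate, and it only remains to verify the three properties of $S$. First I would check that $S$ is unital: $S(1) = T(1)^* T(1) = 1$, using that $T(1)$ is unitary. Next I would establish that $S$ is symmetric on self-adjoint elements, which by linearity gives $S(a^*) = S(a)^*$ for all $a$. This is precisely where identity \eqref{partial} enters: for a self-adjoint $a$, multiplying \eqref{partial} on the left and right by $T(1)$ and invoking unitarity of $T(1)$ (so that $T(1)T(1)^* = T(1)^*T(1) = 1$) should collapse the relation to $T(1)^* T(a) = (T(1)^* T(a))^*$, i.e. $S(a) = S(a)^*$.

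The core of the argument is the Jordan multiplicativity $S(a^2) = S(a)^2$ for self-adjoint $a$, and the plan is to extract this from identities \eqref{eq new 4.6} and \eqref{eq new 4.7}. With $T(1)$ unitary, \eqref{eq new 4.6} simplifies considerably: the first two terms on the right become $T(a) + T(a) = 2T(a)$ after using $T(1)^*T(1) = T(1)T(1)^* = 1$, forcing $T(1)T(a)^*T(1) = T(a)$, equivalently $T(a)^* = T(1)^* T(a) T(1)^*$; rewritten in terms of $S$ this records the symmetry just established and is consistent with it. The substantive relation is \eqref{eq new 4.7}: multiplying it on the left by $T(1)^*$ and substituting $S(a) = T(1)^* T(a)$, $S(a^2) = T(1)^* T(a^2)$ everywhere (and handling the adjoint terms via the symmetry $S(a) = S(a)^*$ and $T(a)^* T(1) = S(a)^* = S(a)$, etc.) should reduce the six-term expression to a clean Jordan identity. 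After cancellation of the $S(a^2)$ terms against their counterparts on the right, what survives is expected to be $0 = -2S(a)^2 - 2 S(a)^2 + 2 S(a)^2 + \ldots$, which I would rearrange into $S(a^2) = S(a)^2$.

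The hard part will be the bookkeeping in the reduction of \eqref{eq new 4.7}: each of the six terms is a triple product $T(x)T(y)^*T(z)$, and converting these into monomials in $S$ requires repeatedly inserting $1 = T(1)T(1)^*$ and systematically applying the symmetry relation to turn adjoints $T(\cdot)^*$ into the correct $S$-expressions without sign or ordering errors. I would carry out this substitution term by term, keeping careful track of which factors of $T(1)$ or $T(1)^*$ are absorbed, and verify that the $T(1) T(a^2)^* T(1)$ term pairs correctly with the leading $T(a^2)$ terms under the symmetry of $S$. Once Jordan multiplicativity is proved on the real-linear span of self-adjoint elements, I would extend $S(a^2) = S(a)^2$ to all of $A$ by the standard polarization argument (writing a general $a = h + ik$ with $h,k$ self-adjoint and using linearity together with the already-established symmetry), thereby concluding that $S$ is a unital Jordan $^*$-homomorphism with $T = T(1) S$.
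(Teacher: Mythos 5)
Your proposal is correct and follows essentially the same route as the paper's proof: you define $S(a)=T(1)^*T(a)$, obtain symmetry from \eqref{eq new 4.6} (equivalently \eqref{partial}, which with $T(1)$ unitary collapses to $T(a)=T(1)T(a)^*T(1)$ for $a=a^*$), and reduce \eqref{eq new 4.7} via left multiplication by $T(1)^*$ to $S(a^2)=S(a)^2$ on self-adjoint elements --- exactly the paper's computation, which arrives at $T(a^2)=T(1)T(a)^*T(a)$ before multiplying by $T(1)^*$. The bookkeeping you flag does close as expected (each adjoint term becomes $T(a)^*T(1)=S(a)$, $T(a)^*T(a)=S(a)^2$, $T(a^2)^*T(1)=S(a^2)$, yielding $S(a^2)=3S(a^2)-2S(a)^2$), and your explicit polarization step is a point the paper leaves implicit.
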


\begin{proof}  By hypothesis $v= T(1)$ is a unitary in $B$. We deduce from \eqref{eq new 4.6} that $$ T(a)=v T(a)^*v,$$ for every self-adjoint element $a\in A$, and hence, by linearity,
\begin{equation}\label{eq 5.10} T(a)=v T(a^*)^*v, \hbox{ or equivalently, } v^* T(a)= T(a^*)^* v,
\end{equation} for every $a\in A$. Therefore, the mapping $S:A\to B$, given by $S(x):=v^*T(x)$, is symmetric ($S(x^*)= S(x)^*$), and $S(1) = v^* T(1) = v^* v =1$.\smallskip

Now, since $v^*v=1 = vv^*$, we deduce from \eqref{eq new 4.7} and \eqref{eq 5.10} that $$T(a^2)=  v T(a)^* T(a) ,$$
for every $a=a^*$ in $A$. Multiplying on the left by $v^*$ we obtain: $$S(a^2)= v^* v T(a)^* T(a) = T(a)^* T(a) = S(a)^* S(a) = S(a)^2,$$ for every $a=a^*$ in $A$, and hence $S$ is a Jordan $^*$-homomorphism. It is also clear that $T(a) = v v^* T(a) = v S(a),$ for every $a$ in $A$.
\end{proof}

We recall that, according to \cite[Proposition 1.6.3]{Sak}, for a C$^*$-algebra, $A$, the intersection $\partial_{e} (A_1)\cap A_{sa}$ is precisely the set of all self-adjoint unitary elements of $A$.

\begin{corollary}\label{c BP-primeCstar necessary condition symmetric} Let $A$ and $B$ be unital $C^*$-algebras. Let $T:A\to B$ be a symmetric linear map. If $T$ preserves extreme points then $T(1)$ is a self-adjoint unitary element in $B$ and there exists a unital Jordan $^*$-homomorphism $S: A\to B$ satisfying $T(a) = T(1) S(a)$, for every $a\in A$.
\end{corollary}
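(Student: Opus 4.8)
The plan is to deduce Corollary~\ref{c BP-primeCstar necessary condition symmetric} from Proposition~\ref{p BP-primeCstar necessary condition v unitary} by showing that, under the additional hypothesis that $T$ is symmetric, the partial isometry $v=T(1)$ is automatically a self-adjoint unitary. Once this is established, the factorization $T(a)=T(1)S(a)$ with $S$ a unital Jordan $^*$-homomorphism follows immediately by invoking Proposition~\ref{p BP-primeCstar necessary condition v unitary}, whose hypotheses are then satisfied.

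First I would recall that, since $T$ preserves extreme points, the element $v=T(1)$ is an extreme point of the closed unit ball of $B$; in particular $v$ is a (maximal) partial isometry, as noted before equation~\eqref{eq new 4.6}. Because $T$ is symmetric and $1=1^*$ in $A$, we have $v^*=T(1)^*=T(1^*)^*=T(1)=v$, so $v$ is self-adjoint. Thus $v$ is a self-adjoint partial isometry that is also an extreme point of $B_1$.

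The key step is then to promote this self-adjoint extreme point to a unitary. I would appeal to the remark preceding the corollary, namely \cite[Proposition 1.6.3]{Sak}, which identifies $\partial_{e}(B_1)\cap B_{sa}$ precisely with the set of self-adjoint unitary elements of $B$. Since $v\in\partial_{e}(B_1)$ and $v=v^*\in B_{sa}$, this proposition forces $v$ to be a self-adjoint unitary, so in particular $T(1)$ is unitary.

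Finally, with $T(1)$ unitary, the hypotheses of Proposition~\ref{p BP-primeCstar necessary condition v unitary} hold verbatim, and that result yields a unital Jordan $^*$-homomorphism $S\colon A\to B$ with $T(a)=T(1)S(a)$ for every $a\in A$, completing the proof. I expect the whole argument to be short; the only point requiring care is the correct citation of \cite[Proposition 1.6.3]{Sak} to identify self-adjoint extreme points with self-adjoint unitaries, which is exactly the ingredient the authors flagged in the sentence immediately preceding the statement. No genuine obstacle arises, since symmetry of $T$ does all the work of reducing to the already-proved proposition.
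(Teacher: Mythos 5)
Your proposal is correct and follows essentially the same route as the paper: the authors likewise observe that symmetry forces $T(1)$ to be a self-adjoint extreme point of $B_1$, invoke \cite[Proposition 1.6.3]{Sak} (recalled just before the statement) to conclude it is a self-adjoint unitary, and then apply Proposition \ref{p BP-primeCstar necessary condition v unitary}. Your write-up merely makes explicit the computation $T(1)^*=T(1)$ that the paper leaves implicit.
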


\begin{proof} Suppose that $T$ preserves extreme points. Since $T$ is symmetric, the element $T(1)$ must be a self-adjoint extreme point of the closed unit ball of $B$, and hence a self-adjoint unitary element. Proposition \ref{p BP-primeCstar necessary condition v unitary} assures that $S(a) := T(1) T(a)$ ($a\in A$) is a unital Jordan $^*$-homomorphism and $T (a) = T(1) S(a)$, for every $a\in A$.
\end{proof}

The next result gives sufficient conditions for the reciprocal statement of  Proposition \ref{p BP-primeCstar necessary condition v unitary} and Corollary \ref{c BP-primeCstar necessary condition symmetric}.

\begin{proposition}\label{p BP-primeCstar necessary condition symmetric} Let $T:A\to B$ be a linear map between unital $C^*$-algebras. Suppose that $T$ writes in the form $T = v S$, where $v$ is a unitary element in $B$ and $S: A\to B$ is a unital Jordan $^*$-homomorphism such that $B$ equals the C$^*$-algebra generated by $S(A)$. Then $T$ preserves extreme points.
\end{proposition}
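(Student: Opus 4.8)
The plan is to strip off the unitary factor $v$ first, reducing the statement to one about the Jordan $^*$-homomorphism $S$ alone, and then to exploit the hypothesis $B=C^*(S(A))$ through the classical splitting of a Jordan $^*$-homomorphism into a homomorphic and an anti-homomorphic part. Since $v$ is a unitary in $B$, the left multiplication operator $L_v\colon B\to B$, $b\mapsto vb$, is a surjective linear isometry of $B$ (with inverse $L_{v^*}$), and hence carries $\partial_e(B_1)$ bijectively onto itself. Thus, for $u\in\partial_e(A_1)$, we have $T(u)=vS(u)\in\partial_e(B_1)$ if and only if $S(u)\in\partial_e(B_1)$, and it suffices to show that the unital Jordan $^*$-homomorphism $S$ sends each extreme point of $A_1$ to an extreme point of $B_1$.

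Fix $u\in\partial_e(A_1)$ and put $w:=S(u)$. By Kadison's theorem (\cite[Theorem I.10.2]{Tak}), $u$ is a partial isometry with $(1-uu^*)\,A\,(1-u^*u)=\{0\}$. As $S$ is a unital Jordan $^*$-homomorphism it is a triple homomorphism, and applying it to $uu^*u=u$ gives $ww^*w=w$, so $w$ is again a partial isometry; by Kadison, $w\in\partial_e(B_1)$ is \emph{equivalent} to $(1-ww^*)\,B\,(1-w^*w)=\{0\}$. One is tempted to obtain this by applying $S$ to $(1-uu^*)A(1-u^*u)=0$, but $S$ is only Jordan-multiplicative, and the set $\{b\in B:(1-ww^*)b(1-w^*w)=0\}$, which is $B_2(w)\oplus B_1(w)$, is in general not closed under products. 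This non-closure is the main obstacle, and it is resolved by passing to pieces on which $S$ is genuinely (anti)multiplicative.

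I would therefore move to the enveloping von Neumann algebra $M:=B^{**}$, in which $B$ is weak$^*$-dense; since $B=C^*(S(A))$, $M$ is the von Neumann algebra generated by $S(A)$. By Størmer's structure theorem for Jordan $^*$-homomorphisms there is a central projection $z\in M$ such that $\phi:=z\,S(\cdot)$ is a $^*$-homomorphism onto a weak$^*$-dense $^*$-subalgebra of $zM$, and $\psi:=(1-z)\,S(\cdot)$ is a $^*$-anti-homomorphism onto a weak$^*$-dense $^*$-subalgebra of $(1-z)M$. For the homomorphic part, applying $\phi$ to $(1-uu^*)A(1-u^*u)=0$ yields $(z-\phi(uu^*))\,\phi(A)\,(z-\phi(u^*u))=\{0\}$; as the compression $b\mapsto(z-\phi(uu^*))b(z-\phi(u^*u))$ is separately weak$^*$-continuous and $\phi(A)$ is weak$^*$-dense in $zM$, it vanishes on all of $zM$. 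Since $\phi(uu^*)=(zw)(zw)^*$ and $\phi(u^*u)=(zw)^*(zw)$ are exactly the support projections of $zw$, this says $zw$ is a complete tripotent (an extreme point) in $zM$. For the anti-homomorphic part the identical argument, now with $\psi(ab)=\psi(b)\psi(a)$, gives $((1-z)-\psi(u^*u))\,(1-z)M\,((1-z)-\psi(uu^*))=\{0\}$; here the reversal of order sends $\psi(u^*u)=\psi(u)\psi(u)^*$ and $\psi(uu^*)=\psi(u)^*\psi(u)$ to the left and right supports of $(1-z)w$, so $(1-z)w$ is complete in $(1-z)M$.

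Finally I would reassemble the two summands. Because $z$ is central, $M=zM\oplus(1-z)M$ and $w=zw+(1-z)w$ with matching support projections $ww^*$ and $w^*w$, so the two identities of the previous paragraph combine to $(1-ww^*)\,M\,(1-w^*w)=\{0\}$. Restricting to $B\subseteq M$ and noting $ww^*,w^*w\in B$ with $B$ unital, Kadison's characterization gives $w=S(u)\in\partial_e(B_1)$; together with the reduction of the first paragraph this shows $T(u)=vS(u)\in\partial_e(B_1)$, so $T$ preserves extreme points. The crux is precisely the appeal to Størmer's decomposition, which converts the non-multiplicative compression into one governed by a genuine homomorphism and anti-homomorphism; the remaining care is only in the bookkeeping of the support projections in the anti-homomorphic summand, where products and hence supports are reversed.
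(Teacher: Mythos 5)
Your proof is correct, and its backbone is exactly the paper's: Størmer's decomposition of the Jordan $^*$-homomorphism into a $^*$-homomorphic and a $^*$-anti-homomorphic part along a central projection of the bidual, with the order reversal in the anti-part exchanging the roles of $uu^*$ and $u^*u$ (the paper applies \cite[Theorem 3.3]{Stor1965} to $S^{**}$, writing $S^{**}=S_1+S_2$ with central projections $E,F\in B^{**}$). You differ in two places. First, you strip off the unitary at the outset by noting that $b\mapsto vb$ is a surjective linear isometry of $B$ and hence permutes $\partial_{e}(B_1)$; the paper instead ends with a direct computation of $(1-T(e)T(e)^*)\,B\,(1-T(e)^*T(e))$ --- your reduction is cleaner but equivalent. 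Second, and more substantively, the extension step: the paper proves $(1-S(e)S(e)^*)\,S(A)\,(1-S(e)^*S(e))=\{0\}$ and then simply invokes $B=C^*(S(A))$; as you correctly observe, this implication is not formal, since $\{b\in B : (1-ww^*)\,b\,(1-w^*w)=0\}$ is not a subalgebra. The paper's intended justification is that the same central-splitting computation applies verbatim to any finite product $S(a_1)\cdots S(a_n)$ (because $E,F$ are central and $S_1$, $S_2$ are multiplicative, respectively anti-multiplicative), such products span a norm-dense subspace of $B$, and $b\mapsto pbq$ is norm-continuous. Your alternative --- establishing the annihilation identity on all of $M=B^{**}$ from the weak$^*$-density of $\phi(A)$ in $zM$ (respectively of $\psi(A)$ in $(1-z)M$) together with separate weak$^*$-continuity of multiplication, and then restricting to $B$ --- closes the same gap by a density argument in the bidual, through which the paper passes anyway to apply Størmer's theorem. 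Both routes are sound; yours has the merit of identifying explicitly, and rigorously repairing, the one step the paper states tersely.
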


\begin{proof} Suppose that $T = v S$, where $v$ is a unitary element in $B$ and $S: A\to B$ is a unital Jordan $^*$-homomorphism. Since $S^{**} : A^{**} \to B^{**}$ is a unital Jordan $^*$-homomorphism between von Neumann algebras (cf. \cite[Lemma 3.1]{Stor1965}), Theorem 3.3 in \cite{Stor1965} implies the existence of two orthogonal central projections $E$ and $F$ in $B^{**}$ such that $S_1 = S^{**}: A^{**}\to B^{**} E$ is a $^*$-homomorphism, $S_2 = S^{**}: A^{**}\to B^{**} F$ is a $^*$-anti-homomorphism, $E+F=1$ and $S^{**}=S_1+ S_2$. The equality $1= S(1) = S_1(1) + S_2 (1)$ implies that $S_1 (1) = E$ and $S_2 (1) =F$. \smallskip

Take $e\in \partial_{e}(A_1)$. We claim that $S(e) \in \partial_{e}(B_1)$. Indeed, the equalities $$(1-S(e)S(e)^*) S(A) (1-S(e)^*S(e)) $$ $$= (1-S_1(e)S_1(e)^*-S_2(e)S_2(e)^*) S(A) (1-S_1(e)^*S_1(e)-S_2(e)^*S_2(e))  $$ $$= (E-S_1(e)S_1(e)^*) S_1(A) (E-S_1(e)^*S_1(e)) $$ $$+ (F-S_2(e)S_2(e)^*) S_2(A) (F-S_2(e)^*S_2(e))  $$ $$= S_1 ((1-ee^*) A (1-e^*e))+S_2 ((1-e^*e) A (1-ee^*))= \{0\},$$ together with the fact that $B$ equals the C$^*$-algebra generated by $S(A)$ show that $S(e) \in \partial_{e}(B_1)$.\smallskip

Finally, given $e\in \partial_{e}(A_1)$ we know that $S(e) \in \partial_{e}(B_1)$, and hence $$ (1-T(e)T(e)^*) B (1-T(e)^* T(e)) = (1-vS(e)S(e)^*v) B (1-S(e^*) v^* v S(e))$$ $$= v (1-S(e)S(e)^*) v^* B (1-S(e^*) S(e)) $$ $$\subseteq v (1-S(e)S(e)^*) B (1-S(e^*) S(e)) =\{0\},$$ because $S(e) \in \partial_{e}(B_1)$. We have therefore shown that $T(e)\in \partial_{e}(B_1)$.
\end{proof}

Henceforth, $T:A\to B$ will denote a linear map between unital $C^*$-algebras which preserves extreme points, and we assume that $B$ is prime. Let $v= T(1)\in \partial_{e}(B_1).$ The assumption $B$ being prime implies that $vv^*=1$ or $v^*v=1$. We shall assume that $v^*v=1.$ When $vv^*=1$ we can apply Proposition \ref{p BP-primeCstar necessary condition v unitary}, otherwise (i.e. in the case $vv^*\neq 1$), we cannot deduce the same conclusions. Indeed, from \eqref{eq new 4.6} and \eqref{partial} we deduce that $vv^* T(a) = vT(a)^* v$, for every $a=a^*$ in $A$, and consequently $v^* T(a) = T(a)^* v$, for every $a\in A_{sa}$. Therefore, the operator $S= v^* T$ is unital and symmetric. Moreover, it follows from \eqref{eq new 4.7} that $$v v^* T(a^2)  = 2 T(a)T(a)^*v + 2 v T(a)^* T(a)  - 2 T(a)v^*T(a)- vT(a^2)^*v,$$
for every $a=a^*$ in $A$. Multiplying on the left by $v^*$, and having in mind that $S$ is symmetric, we obtain $$ S(a^2)=   2 S (a) S(a^*)+ 2 T(a)^* T(a)  - 2 S(a) S(a) - S((a^2)^*) = 2 T(a)^* T(a) - S(a^2),$$ which proves that $S(a^2) = T(a)^* T(a)\geq S(a)^2,$ for every $a=a^*$ in $A$.\smallskip

When $M$ is an infinite von Neumann factor, a linear map $T: M\to M$ preserves extreme points if and only if there exist a unitary $u$ in $M$ and a linear map $\Phi: M\to M$ which is either a unital $^*$-homomorphism or a unital $^*$-anti-homomorphism such that
$T(a) = u \Phi (a)$ ($a\in A$) \cite[Theorem 1]{MaMo98}. When $M$ is a finite von Neumann algebra, a linear map $T$ on $M$ preserves extreme points if and only if there exist a unitary $u$ in $M$ and a Jordan $^*$-homomorphism $\Phi: M\to M$ satisfying $T(a) = u \Phi (a)$ ($a\in A$) \cite[Theorem 2]{MaMo98}. Motivated by these results it is natural to ask whether a similar conclusion remains true for operators preserving extreme points between unital C$^*$-algebras. The next simple examples show that the answer is, in general, negative.

\begin{remark}\label{remark v is not necessarily unitary}

Let $H$ be an infinite dimensional complex Hilbert space. Suppose $v$ is a maximal partial isometry in $B(H)$ which is not a unitary.  The operator $T: \mathbb{C} \to B(H),$ $\lambda \mapsto \lambda v$, preserves extreme points, but we cannot write $T$ in the form $T = u \Phi$, where $u$ is a unitary in $B(H)$ and $\Phi$ is a unital Jordan $^*$-homomorphism.
\end{remark}

\begin{remark}\label{remark extreme points preserver is not strongly BP-preserver}
Under the assumptions of Remark \ref{remark v is not necessarily unitary}, let $v,w\in \partial_{e} (B(H)_1)$ such that $v^*v=1= w^*w$ and $vv^*\perp ww^*$. Let $A= \mathbb{C}\oplus^{\infty}\mathbb{C}$.  We consider the following operator $$T: A \to B(H)$$ $$T(\lambda,\mu) = \frac{\lambda}{2} (v+w) + \frac{\mu}{2} (v-w).$$ Clearly $T(1,1) = v$. Furthermore, every extreme point of the closed unit ball of $A$ writes in the form $(\lambda_0,\mu_0)$ with $|\lambda_0|= |\mu_0|=1$. Therefore $T(\lambda_0,\mu_0) = \frac{\lambda_0}{2} (v+w) + \frac{\mu_0}{2} (v-w) = \frac{\lambda_0+\mu_0}{2} v+ \frac{\lambda_0-\mu_0}{2} w$ satisfies $$T(\lambda_0,\mu_0)^*T(\lambda_0,\mu_0)= \left(\frac{\lambda_0+\mu_0}{2} v+ \frac{\lambda_0-\mu_0}{2} w\right)^* \left(\frac{\lambda_0+\mu_0}{2} v+ \frac{\lambda_0-\mu_0}{2} w\right)$$ $$= \frac{|\lambda_0+\mu_0|^2}{4} v^*v+ \frac{|\lambda_0-\mu_0|^2}{4} w^*w  = \left(\frac{|\lambda_0+\mu_0|^2}{4} + \frac{|\lambda_0-\mu_0|^2}{4}  \right) 1 $$ $$= \frac{2 (|\lambda_0|^2+ |\mu_0|^2)}{4} 1 = 1,$$ which proves that $T(\lambda_0,\mu_0)\in \partial_{e} (B(H)_1)$, and hence $T$ preserves extreme points.\smallskip

The mapping $T$ satisfies a stronger property. Elements $a$ and $b$ in the C$^*$-algebra $A$ satisfy $B(a,b) = 0$ if and only if $a b^*=1$. We observe that $A_q^{-1} = A^{-1}$, and hence an element $(\lambda,\mu)\in A_q^{-1}$ if and only if $\lambda\mu \neq 0$. Let us pick $a= (\lambda_0,\mu_0)\in A_q^{-1}$ (with $\lambda_0\mu_0 \neq 0$). Clearly, $a^{\wedge} = \left(\overline{\lambda_0^{-1}},\overline{\mu_0^{-1}}\right)$. It is easy to check that $$T(a^{\wedge})^* T(a)=  \left( \frac{\overline{\lambda_0}^{-1} +\overline{\mu_0}^{-1}}{2} v + \frac{\overline{\lambda_0}^{-1} -\overline{\mu_0}^{-1}}{2} w \right)^*  \left( \frac{\lambda_0 +\mu_0}{2} v + \frac{\lambda_0-\mu_0}{2} w  \right) $$ $$=\left( \frac{{\lambda_0}^{-1} +{\mu_0}^{-1}}{2} v^* + \frac{{\lambda_0}^{-1} -{\mu_0}^{-1}}{2} w^* \right)  \left( \frac{\lambda_0 +\mu_0}{2} v + \frac{\lambda_0-\mu_0}{2} w  \right) $$ $$=\frac14 \frac{(\lambda_0 +\mu_0)^2-(\lambda_0 -\mu_0)^2}{\lambda_0 \mu_0} 1 = 1,$$ and hence $B(T(a),T(a^{\wedge}))=0,$ which shows that $T$ preserves Bergmann-zero pairs.\smallskip

It is easy to check that $T(1,-1)= w$, and hence $v^* T(1,-1)=v^*w =0$, and $vv^* T(1,-1) = 0$. For $S= v^* T$ we have $S(1,-1)^2 = 0$ but $S((1,-1)^2) = S(1,1) =v$, that is, $S$ is not a Jordan homomorphism. We can further check that $T$ is not a triple homomorphism, for example, $ (1,0)$ is a tripotent in $A$ but $\|T(1,0)\|= \frac{1}{\sqrt{2}}$, and hence $T(1,0)$ is not a tripotent in $B(H)$.\smallskip

Finally, for $a= (\lambda_0,\mu_0)\in A_q^{-1}$ (with $\lambda_0\mu_0 \neq 0$), $T(a^{\wedge}) = \frac{\overline{\lambda_0}^{-1}}{2} (v+w) + \frac{\overline{\mu_0}^{-1}}{2} (v-w)$ need not coincide with $T(a)^{\wedge} = \left( \frac{\lambda_0}{2} (v+w) + \frac{\mu_0}{2} (v-w) \right)^{\wedge}$. Indeed, $T(2,1)= \frac32 v + \frac12 w = \frac{\sqrt{10}}{2} r$, where $r= \frac{3}{\sqrt{10}} v + \frac{1}{\sqrt{10}} w$ is the range tripotent of $T(2,1)$, and thus $T(2,1)^{\wedge}= \frac{2}{\sqrt{10}} r = \frac35 v + \frac15 w$. Clearly, $T((2,1)^{\wedge}) = T(1/2,1) = \frac34 v - \frac14 w$.
\end{remark}

The counter-examples provided by Remark \ref{remark extreme points preserver is not strongly BP-preserver} point out that the conclusions found by Mascioni and Moln\'{a}r for linear maps preserving extreme points on infinite von Neumann factor (cf. \cite{MaMo98}) are not expectable for general C$^*$-algebras. We shall show that a more tractable description is possible for linear maps strongly preserving Brown-Pedersen quasi-invertibility. The proofs are based on the JB$^*$-triple structure underlying every C$^*$-algebra.\smallskip

The following variant of Proposition \ref{cubes} follows with similar arguments, its proof is outlined here.

\begin{proposition}\label{p cubes strongly BP preserver}
Let $E$ and $F$ be JB$^*$-triples, and let $T:E\to F$ be a non-zero linear map strongly preserving Brown-Pedersen quasi-invertible elements, that is, $T(x^\wedge)=T(x)^\wedge$ for every $ x \in E^{-1}_{q}$. Then $$T(x^{[3]})=T(x)^{[3]},$$ for every $ x \in E^{-1}_q$.
\end{proposition}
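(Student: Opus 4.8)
The plan is to mirror the argument used in Proposition \ref{cubes}, replacing the role of regularity with that of Brown-Pedersen quasi-invertibility. The key observation is that the Hua identity argument only requires that the relevant elements be invertible in a suitable unital Jordan algebra and that their generalized inverses coincide with their Jordan inverses; the strong preservation hypothesis supplies exactly this. So first I would fix a nonzero $x \in E_q^{-1}$, let $e = r(x)$ be its range tripotent, and recall from the characterization of Brown-Pedersen quasi-invertibility that $e$ is a \emph{complete} tripotent and $x$ is positive and invertible in the unital JB$^*$-algebra $E_2(e)$, with Jordan inverse equal to $x^\wedge$. Identifying $E_x$ with $C(\mathrm{Sp}(x))$, for small $|\lambda|$ the element $\lambda x^\wedge - x$ is invertible in $E_x$, hence invertible in $E_2(e)$, and Hua's identity \eqref{Hua} applied to $a=x$, $b=\lambda^{-1}x$ yields
$$ x - \lambda^{-1} x^{[3]} = \left(x^\wedge - (x - \lambda x^\wedge)^\wedge\right)^\wedge, $$
exactly as before.

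The next step is to verify that every element appearing in this identity is genuinely Brown-Pedersen quasi-invertible in $E$ (not merely regular), so that the hypothesis $T(y^\wedge) = T(y)^\wedge$ may be legitimately applied to each of them. This is where the argument diverges from Proposition \ref{cubes}, and I expect it to be the main obstacle. The point is that an element which is invertible in the unital JB$^*$-algebra $E_2(e)$ is automatically Brown-Pedersen quasi-invertible in $E$: its range tripotent majorizes $e$, which is complete, forcing the range tripotent to be complete, hence an extreme point of the unit ball, which by the characterization in \cite{JamSiddTah2013} and \cite{JaSiTaPe15} is equivalent to membership in $E_q^{-1}$. Applying this to $x$, to $x - \lambda x^\wedge$, to $x^\wedge - (x-\lambda x^\wedge)^\wedge$, and to the whole composite, I can confirm each lies in $E_q^{-1}$. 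Care is needed to ensure $\lambda$ is chosen small enough (below both $\|x^\wedge\|^{-2}$ and $\|T(x)^\wedge\|^{-2}$) that the corresponding elements on the image side are also quasi-invertible.

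With that in place, the computation closes exactly as in Proposition \ref{cubes}. Since $T$ strongly preserves Brown-Pedersen quasi-invertibility we have $T(x)^\wedge = T(x^\wedge)$, and for $\lambda$ with $0 < |\lambda| < \min\{\|x^\wedge\|^{-2}, \|T(x)^\wedge\|^{-2}\}$ the same Hua identity holds for $T(x)$ in $F$. Pushing $T$ through the nested generalized inverses, using linearity at the innermost level and strong preservation at each application of $\wedge$, I obtain
$$ T(x) - \lambda^{-1} T(x)^{[3]} = T(x) - \lambda^{-1} T\!\left(x^{[3]}\right), $$
and comparing coefficients of $\lambda^{-1}$ gives $T(x^{[3]}) = T(x)^{[3]}$ for all $x \in E_q^{-1}$. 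The only genuinely new input relative to the earlier proposition is the verification that quasi-invertibility propagates through the Hua construction, which rests on the completeness of the range tripotent; everything else is a formal repetition, which is why the authors state the proof can be merely outlined.
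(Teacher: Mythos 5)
Your proposal is correct and follows essentially the same route as the paper's own (outlined) proof: Hua's identity in $E_x\equiv C(\mathrm{Sp}(x))$ exactly as in Proposition \ref{cubes}, together with the verification that the intermediate elements are Brown--Pedersen quasi-invertible because they are invertible in $E_2(e)$ with $e=r(x)$ complete --- the paper takes $0<\lambda<\|x^\wedge\|^{-2}$ real so that $x-\lambda x^\wedge$ is positive with range tripotent exactly $e$, while your slightly more general ``invertible in $E_2(e)$ implies quasi-invertible'' step is precisely \cite[Lemma 2.2]{JaSiTaPe15}, the same lemma invoked in Theorem \ref{non-empty-extr}. One cosmetic slip worth fixing: the range tripotent of an invertible element of $E_2(e)$ need not \emph{majorize} $e$ (for instance $x^\wedge-(x-\lambda x^\wedge)^\wedge$ is negative in $E_x$, with range tripotent $-e$); the correct mechanism is that this range tripotent is a unitary of $E_2(e)$ and hence complete, which is exactly what the cited lemma encapsulates, so the argument is unaffected.
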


\begin{proof} Let $x$ be an element in $E^{-1}_q$, and let $e=r(x)\in \partial_{e} (E_1)$ denote its range tripotent. For each $0<\lambda <\|x^{\wedge}\|^{-2}$ the element $\lambda x^{\wedge} -x$ is Brown-Pedersen quasi-invertible in $E$. Indeed, if we regard $\lambda x^{\wedge} -x$ as an element in $E_x\equiv C(\hbox{Sp}(x))$, the JB$^*$-subtriple of $E$ generated by $x$ (see page \pageref{equ subtriple single generated}), then $x-\lambda x^{\wedge}$ is invertible and positive in $E_x,$ and its range tripotent is $r(x-\lambda x^{\wedge}) = e\in \partial_{e} (E_1)$. By Hua's identity (cf. \eqref{Hua}), we have $$x-\lambda^{-1} x^{[3]}=\left(x^\wedge-(x-\lambda x^\wedge)^\wedge\right)^\wedge.$$

Given $0<\lambda <\min\{||x^{\wedge}||^{-2},||T(x)^{\wedge}||^{-2}\}$, since $T$ strongly preserves Brown-Pedersen quasi-invertible elements, and $x, \lambda x^{\wedge} -x, T(x)$, and $T(\lambda x^{\wedge} -x)$ are Brown-Pedersen quasi-invertible, we deduce that $$T(x)-\lambda ^{-1} T(x)^{[3]}=(T(x)^\wedge-(T(x) - \lambda
T(x)^\wedge)^\wedge)^\wedge $$ $$= (T(x^\wedge)-(T(x) - \lambda
T(x^\wedge))^\wedge)^\wedge= T(\left(x^\wedge-(x-\lambda x^\wedge)^\wedge\right)^\wedge) =T(x)-\lambda^{-1}T(x^{[3]}),$$ for every $0<\lambda$ as above, which proves the desired statement.
\end{proof}

The full meaning of Theorem \ref{non-empty-extr} (and the role played by \cite[Lemma 2.2]{JaSiTaPe15} in its proof) is more explicit in the following result,
whose proof follows the lines we gave in the just mentioned theorem but replacing Proposition \ref{cubes} with Proposition \ref{p cubes strongly BP preserver}.

\begin{theorem}\label{thm non-empty-extr strongly BP preserver triples} Let $E$ and $F$ be JB$^*$-triples with $\partial_{e}(E_1)\neq\emptyset$.
Suppose $T:E\to F$ is a linear map strongly preserving Brown-Pedersen quasi-invertible elements. Then $T$ is a triple homomorphism.$\hfill\Box$
\end{theorem}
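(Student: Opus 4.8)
The plan is to mirror the proof of Theorem~\ref{non-empty-extr} exactly, substituting the Brown-Pedersen quasi-invertibility machinery for the regularity machinery at each step. The essential point is that the argument in Theorem~\ref{non-empty-extr} never used the full strength of strong regularity preservation beyond what Proposition~\ref{cubes} delivered, namely that $T$ preserves cubes of a sufficiently rich supply of elements. Here Proposition~\ref{p cubes strongly BP preserver} plays the role of Proposition~\ref{cubes}, giving $T(x^{[3]}) = T(x)^{[3]}$ for every $x \in E_q^{-1}$ rather than for every $x \in E^\wedge$. So the heart of the matter is to manufacture enough Brown-Pedersen quasi-invertible elements to recover cube-preservation on \emph{all} of $E$, and then pass to the triple product by polarization.

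Concretely, I would first fix a complete tripotent $e \in E$, which exists since $\partial_e(E_1) \neq \emptyset$ and complete tripotents coincide with extreme points of the unit ball. For an arbitrary $x \in E$ and a scalar $\lambda \in \CC$ with $|\lambda| > \|P_2(e)(x)\|$, the Peirce-$2$ component $P_2(e)(x - \lambda e) = P_2(e)(x) - \lambda e$ is invertible in the unital JB$^*$-algebra $E_2(e)$. By \cite[Lemma 2.2]{JaSiTaPe15} this forces $x - \lambda e$ to be Brown-Pedersen quasi-invertible in $E$. Applying Proposition~\ref{p cubes strongly BP preserver} then yields
$$T\left((x - \lambda e)^{[3]}\right) = T(x - \lambda e)^{[3]}$$
for all such $\lambda$. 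Expanding both sides as polynomials in $\lambda$ (using the linearity of $T$ and the cubic structure of the triple product) and matching the $\lambda$-free coefficients — valid because the identity holds on an open set of scalars — I extract $T(x^{[3]}) = T(x)^{[3]}$ for every $x \in E$.

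Finally I would invoke the polarization formula \eqref{eq polarization fla}, which expresses $8\{x,y,z\}$ as a linear combination of cubes $(x + i^k y + (-1)^j z)^{[3]}$. Since $T$ is linear and preserves cubes of every element of $E$, applying $T$ to this identity and using cube-preservation on each term shows $T(\{x,y,z\}) = \{T(x), T(y), T(z)\}$ for all $x,y,z \in E$, i.e. $T$ is a triple homomorphism. The only genuine obstacle, already resolved by the preliminary setup, is guaranteeing that the perturbed elements $x - \lambda e$ land in $E_q^{-1}$; this is exactly where the hypothesis $\partial_e(E_1) \neq \emptyset$ is indispensable, since without a complete tripotent to perturb against, Proposition~\ref{p cubes strongly BP preserver} would only give cube-preservation on the potentially meager set $E_q^{-1}$ rather than on all of $E$. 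The replacement of \cite[Lemma 2.2]{JaSiTaPe15} for its regularity analogue is the precise technical substitution that makes the BP-quasi-invertible version go through verbatim.
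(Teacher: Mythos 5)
Your proposal is correct and is essentially identical to the paper's own proof: the authors explicitly state that Theorem \ref{thm non-empty-extr strongly BP preserver triples} follows the lines of Theorem \ref{non-empty-extr} with Proposition \ref{cubes} replaced by Proposition \ref{p cubes strongly BP preserver}, which is exactly your scheme, including the perturbation $x-\lambda e$ against a complete tripotent, the appeal to \cite[Lemma 2.2]{JaSiTaPe15} to place $x-\lambda e$ in $E_q^{-1}$, and the concluding polarization via \eqref{eq polarization fla}. The only cosmetic refinement worth noting is that $(x-\lambda e)^{[3]}$ expands as a polynomial in $\lambda$ and $\overline{\lambda}$ (the middle triple-product slot is conjugate linear), but your coefficient-matching over an open set of scalars handles this exactly as the paper's implicit argument does.
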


We can state now our conclusions on linear maps strongly preserving Brown-Pedersen quasi-invertibility.

\begin{theorem}\label{thm them BP-primeCstar necessary condition unital Calgebras} Let $A$ and $B$ be unital $C^*$-algebras. Let $T:A\to B$ be a linear map strongly preserving Brown-Pedersen quasi-invertible elements. Then there exists a Jordan $^*$-homomorphism $S: A\to B$ satisfying $T(x) = T(1) S(x)$, for every $x\in A$.\smallskip

\noindent We further know that $$T(A) \subseteq T(1)T(1)^* B T(1)^*T(1), \ S (A) \subseteq T(1)^*T(1) B T(1)^* T(1),$$ and $S: A\to T(1)^* T(1) B T(1)^* T(1)$ is a unital Jordan $^*$-homomorphism.
\end{theorem}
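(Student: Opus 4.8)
The plan is to derive the statement from the triple-homomorphism theorem already available and then read off the Jordan structure through elementary manipulations with the partial isometry $v:=T(1)$.

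First I would note that, since $A$ is unital, its identity is a unitary and hence an extreme point of $A_1$, so $\partial_{e}(A_1)\neq\emptyset$. As $T$ strongly preserves Brown--Pedersen quasi-invertibility, Theorem \ref{thm non-empty-extr strongly BP preserver triples} applies and $T$ is a triple homomorphism for the triple products \eqref{e-triprod-Cstar}. Strongly preserving Brown--Pedersen quasi-invertibility also forces $T$ to preserve extreme points, whence $v=T(1)$ is an extreme point of $B_1$, i.e. a maximal partial isometry; in particular $vv^*$ and $v^*v$ are projections and $vv^*v=v$.

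The next step is to record the identities flowing from $T(\{x,y,z\})=\{T(x),T(y),T(z)\}$. Evaluating $\{1,a,1\}=a^*$ gives $T(a^*)=vT(a)^*v$ for all $a\in A$; taking adjoints and replacing $a$ by $a^*$ then yields the support identity $T(a)=vv^*\,T(a)\,v^*v$, which already proves $T(A)\subseteq vv^*Bv^*v=T(1)T(1)^*BT(1)^*T(1)$. Since $vv^*$ is a projection, this gives $vv^*T(a)=T(a)$, so setting $S:=v^*T$ we obtain $vS(a)=vv^*T(a)=T(a)$, the desired factorization $T=vS$. The same identity yields $v^*vS(a)=S(a)=S(a)v^*v$, so $S(A)\subseteq v^*vBv^*v=T(1)^*T(1)BT(1)^*T(1)$ with $S(1)=v^*v$ acting as the unit of this corner; and from $T(a^*)=vT(a)^*v$ one checks $S(a^*)=v^*vT(a)^*v=T(a)^*v=S(a)^*$, so $S$ is symmetric.

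It remains to prove that $S$ is Jordan-multiplicative, and here I would use the single identity $\{a,1,b\}=a\circ b$. Applying $T$ gives $T(a\circ b)=\{T(a),v,T(b)\}=\tfrac12\bigl(T(a)v^*T(b)+T(b)v^*T(a)\bigr)$; substituting $T(a)=vS(a)$ and using $v^*vS(b)=S(b)$ collapses the two summands to $v\,S(a)S(b)$ and $v\,S(b)S(a)$, so that $T(a\circ b)=v\,(S(a)\circ S(b))$. Comparing with $T(a\circ b)=vS(a\circ b)$ and applying $v^*$ on the left, the relation $v^*vS(x)=S(x)$ turns the two sides into $S(a\circ b)$ and $S(a)\circ S(b)$, giving $S(a\circ b)=S(a)\circ S(b)$. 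Thus $S$ is a unital Jordan $^*$-homomorphism onto the corner $v^*vBv^*v$ with $T=vS$, which is the assertion. I expect the only delicate point to be the disciplined bookkeeping of the two support projections $vv^*$ and $v^*v$ --- confirming that the factorization $T=vS$ is exact and that cancelling $v^*$ loses no information on $v^*vBv^*v$; the rest is a direct translation of the triple-homomorphism identities into the Jordan language.
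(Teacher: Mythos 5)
Your proposal is correct and follows essentially the same route as the paper: invoke Theorem~\ref{thm non-empty-extr strongly BP preserver triples} to get that $T$ is a triple homomorphism, set $v=T(1)$ and $S=v^{*}T$, and verify symmetry and Jordan multiplicativity from the identities $T(a^{*})=vT(a)^{*}v$ and $T(a\circ b)=\{T(a),v,T(b)\}$. The only (harmless, in fact slightly cleaner) deviation is that you obtain the support identity $T(a)=vv^{*}T(a)v^{*}v$ by iterating $T(a^{*})=vT(a)^{*}v$, so you never need the extreme-point relation $(1-vv^{*})T(x)(1-v^{*}v)=0$ that the paper combines with $T(x)=\{T(x),v,v\}$ to reach the same conclusion.
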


\begin{proof} Since $T$ preserves extreme points, $v= T(1)\in \partial_{e} (B_1)$ is a partial isometry with \begin{equation}\label{eq 5.13} (1-vv^*) T(x) (1-v^*v)=0
\end{equation} for every $x\in A$. It follows from \eqref{partial} that  $v T(a)^* v =vv^*T(a) v^*v$, for every $a=a^*\in A$.\smallskip

Now, Theorem \ref{thm them BP-primeCstar necessary condition unital Calgebras} assures that $T$ is a triple homomorphism. Thus, we have \begin{equation}\label{eq 5.15} T(x) =T \{x,1,1\} = \{T(x),v,v\} = \frac12 (T(x) v^*v + vv^* T(x)),
\end{equation} and \begin{equation}\label{eq 5.16} T(x^*) =T \{1,x,1\} = \{v,T(x),v\} = vT(x)^*v,
\end{equation} for every $x\in A$. Identities \eqref{eq 5.13} and \eqref{eq 5.15} give: \begin{equation}\label{eq 5.17} T(x) =vv^* T(x)v^*v=vv^* T(x) = T(x)v^*v,
\end{equation} for every $x\in A$. Multiplying on the left by $v^*$ we get $$v^* T(x) = v^* T(x)v^*v = \hbox{(by \eqref{eq 5.16})}= T(x^*)^* v,$$ for every $x\in A$, which proves that $S= v^* T : A\to B$ is a symmetric operator. Furthermore, since $T$ is a triple homomorphism, we have $$S(x^2) = v^* T\{x,1,x\} = v^* \{T(x),v,T(x)\} = v^* T(x) v^* T(x) =S(x)^2,$$ for all $x\in A$, which guarantees that $S$ is a Jordan $^*$-homomorphism. The identity in \eqref{eq 5.17} gives $T(x) = vv^* T(x) = v S(x)$, for every $x\in A$. The rest is clear.
\end{proof}

\begin{remark}
Under the hypothesis of Theorem \ref{thm them BP-primeCstar necessary condition unital Calgebras} we can similarly prove that the mapping $S_1 : A\to B$, $S_1 (x) = T(x) T(1)^*$ is a Jordan $^*$-homomorphism and $T(x) = S_1 (x) v$, for every $x$ in $A$.
\end{remark}

If $v$ is an extreme point of the closed unit ball of a prime unital C$^*$-algebra $B$, then $1= vv^*$ or $v^*v=1$. Therefore, the next result is a straight consequence of the previous Theorem \ref{thm them BP-primeCstar necessary condition unital Calgebras}.

\begin{corollary}\label{cor thm BP-primeCstar necessary condition unital Calgebras} Let $A$ and $B$ be unital $C^*$-algebras with $B$ prime. Let $T:A\to B$ be a linear map strongly preserving Brown-Pedersen quasi-invertible elements. Then one of the following statements holds:\begin{enumerate}[$(a)$] \item $T(1)^* T(1)=1$, $T(1) T(1)^* T(a) = T(a)$, for every $a\in A$, and there exists a unital Jordan $^*$-homomorphism $S: A\to B$ satisfying $T(a) = T(1) S(a)$, for every $a\in A$;
\item $T(1) T(1)^*=1$, $T(a) T(1)^* T(1)= T(a)$, for every $a\in A$, and there exists a unital Jordan $^*$-homomorphism $S: A\to B$ satisfying $T(a) = S(a) T(1)$, for every $a\in A$.
\end{enumerate} $\hfill\Box$
\end{corollary}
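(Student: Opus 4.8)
The plan is to read off the dichotomy directly from Theorem \ref{thm them BP-primeCstar necessary condition unital Calgebras}, using primeness of $B$ only to pin down the partial isometry $v:=T(1)$. First I would record what the theorem already delivers. Since $T$ strongly preserves Brown-Pedersen quasi-invertible elements it preserves extreme points, so $v=T(1)$ is an extreme point of $B_1$; by Kadison's characterization (\cite[Theorem I.10.2]{Tak}) this means $v$ is a maximal partial isometry, that is, $(1-vv^*)\,B\,(1-v^*v)=0$. Moreover the theorem supplies the Jordan $^*$-homomorphism $S=v^*T$ with $T=vS$, together with the normalization identity \eqref{eq 5.17}, namely $T(x)=vv^*T(x)=T(x)v^*v$ for every $x\in A$. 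The Remark immediately following the theorem provides the dual factorization $S_1(x)=T(x)v^*$, a Jordan $^*$-homomorphism with $T(x)=S_1(x)v$.

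The key step is the primeness reduction. Recall that $B$ prime means $a\,B\,b=\{0\}$ forces $a=0$ or $b=0$. Applying this to $a=1-vv^*$ and $b=1-v^*v$ in the maximality relation $(1-vv^*)\,B\,(1-v^*v)=\{0\}$ yields at once that $vv^*=1$ or $v^*v=1$. These two (non-exclusive) alternatives are exactly what produce statements $(a)$ and $(b)$, and this single observation is the entire content of the corollary.

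In the case $v^*v=1$ I would argue as follows. The identity \eqref{eq 5.17} gives $T(a)=vv^*T(a)$, which is precisely $T(1)T(1)^*T(a)=T(a)$; and $S=v^*T$ satisfies $S(1)=v^*v=1$, so $S$ is a \emph{unital} Jordan $^*$-homomorphism with $T(a)=vS(a)=T(1)S(a)$, establishing $(a)$. In the case $vv^*=1$ I would instead use the dual map $S_1(x)=T(x)v^*$: now \eqref{eq 5.17} reads $T(a)=T(a)v^*v$, i.e. $T(a)T(1)^*T(1)=T(a)$, while $S_1(1)=vv^*=1$ shows $S_1$ is unital, and $T(a)=S_1(a)v=S_1(a)T(1)$, establishing $(b)$. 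Since at least one of $vv^*=1$, $v^*v=1$ holds (both when $v$ is unitary), one of $(a)$, $(b)$ always applies. There is essentially no obstacle here; the only point demanding care is bookkeeping, making sure the left factorization $T=vS$ is paired with $v^*v=1$ and the right factorization $T=S_1 v$ with $vv^*=1$, so that the relevant map turns out unital.
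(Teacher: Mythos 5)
Your proposal is correct and follows exactly the route the paper intends: the corollary is stated there as a ``straight consequence'' of Theorem \ref{thm them BP-primeCstar necessary condition unital Calgebras} via precisely your key observation, namely that primeness applied to the maximality relation $(1-vv^*)\,B\,(1-v^*v)=\{0\}$ forces $vv^*=1$ or $v^*v=1$. Your case-by-case bookkeeping --- pairing $v^*v=1$ with $S=T(1)^*T(\cdot)$ and the identity $T(a)=vv^*T(a)$ from \eqref{eq 5.17}, and $vv^*=1$ with the dual map $S_1(\cdot)=T(\cdot)T(1)^*$ from the remark following the theorem --- simply makes explicit what the paper leaves implicit.
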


\end{document}